\newtheorem{theorem}{Theorem}[section]
\newtheorem{corollary}[theorem]{Corollary}
\newtheorem{lemma}[theorem]{Lemma}
\newtheorem{example}{Example}
\newtheorem{assumption}{Assumption}
\numberwithin{equation}{section}
\begin{document}

\title[An inverse random source problem]{An inverse random source
problem for the one-dimensional Helmholtz equation with attenuation}

\author{Peijun Li}
\address{Department of Mathematics, Purdue University, West Lafayette, Indiana 47907, USA}
\email{lipeijun@math.purdue.edu}

\author{Xu Wang}
\address{Department of Mathematics, Purdue University, West Lafayette, Indiana 47907, USA}
\email{wang4191@purdue.edu}

\thanks{The research is supported in by part the NSF grant DMS-1912704.}

%\subjclass[2010]{35R30, 35R60, 65M32}

\keywords{Helmholtz equation, inverse source problem, microlocally isotropic Gaussian random field, white noise, uniqueness}

\begin{abstract}
This paper is concerned with an inverse random source problem for the
one-dimensional stochastic Helmholtz equation with attenuation. The
source is assumed to be a microlocally isotropic Gaussian random field with its
covariance operator being a classical pseudo-differential operator. The random
sources under consideration are equivalent to the generalized fractional Gaussian
random fields which include rough fields and can be even rougher than the white
noise, and hence should be interpreted as distributions. The well-posedness of
the direct source scattering problem is established in the distribution
sense. The micro-correlation strength of the random source, which appears to be
the strength in the principal symbol of the covariance operator, is proved to be
uniquely determined by the wave field in an open measurement set. Numerical
experiments are presented for the white noise model to demonstrate
the validity and effectiveness of the proposed method. 
\end{abstract}

\maketitle

\section{Introduction}

Inverse source problems for wave propagation aim to determine the unknown
sources by using supplementary information of the wave field. They arise
naturally and have significant applications in diverse fields of science, which
include particularly the area of medical and biomedical imaging such as
magnetoencephalography \cite{ABF02,FKM04}, optical molecular imaging
\cite{BT07}, and fluorescence tomography \cite{CYZB14}. Motivated by these
applications, inverse source scattering problems have been extensively
investigated, and many mathematical and numerical results are available
\cite{BLZ20,BLT10,I90,IL18}.

Recently, to characterize more precisely the uncertainties in unpredictable
systems with incomplete knowledge, random sources are taken into consideration
in mathematical modeling \cite{BCL16,BCL18,D79}. As is well known, classical
inverse problems are already rather difficult to solve due to the nonlinearity
and ill-posedness. Inverse problems with random sources would be more
challenging since the ill-posedness is severer compared to their deterministic
counterparts: (1) the random source, in some cases, is too rough to exist
pointwisely and should be interpreted as distributions instead; (2) the wave
field generated by the random source is also a random field. Random fields are
determined by their statistics such as the mean and covariance functions. As a
result, only statistics of the random source may be reconstructed based on the
statistics of the wave field. It is worth pointing out that the statistics of
the random source which can be determined and the statistics of the wave field
which can be used as proper measurement data depend heavily on the form of the
random source, which makes it hard to solve inverse random source problems.

In this paper, we consider the one-dimensional stochastic Helmholtz equation
with attenuation 
\begin{equation}\label{eq:model}
u''(x)+(k^2+{\rm i}k\sigma) u(x)=f(x),\quad x\in\mathbb{R},
\end{equation}
where $k>0$ is the wave number, the attenuation coefficient $\sigma>0$ describes
the electrical conductivity of the medium, $u$ denotes the scattered field, and
$f$ represents the electric current density and is assumed to be a random
field supported in $D=(0,1)$. In the one-dimensional case, the outgoing radiation
condition imposed on $u$ is equivalent to the following boundary conditions:  
\[
u'(0)+{\rm i}\kappa u(0)=0, \quad
u'(1)+{\rm i}\kappa u(1)=0, 
\]
which accounts for the left-going wave at $x=0$ and the right-going wave at
$x=1$, respectively. Here $\kappa$ satisfies $\kappa^2=k^2+{\rm i}\sigma k$. 

There has been much work on the study of inverse random source problems. When
the source takes the form $f=g+h\dot{W}$, where $\dot{W}$ is the spatial white
noise, $g$ and $h$ are smooth and compactly supported functions, the random
source has independent increments. As a result, the It\^o isometry can be used
to derive reconstruction formulas which connect the statistics of the random
source to those of the wave field, and the functions $g$ and $h$ can be
determined based on the measurement data at multiple frequencies. We refer to
\cite{BCL16,BCLZ14,L11} for the study on the stochastic Helmholtz equation
without attenuation and to \cite{BCL18} for the study on the stochastic elastic
wave equation. 

More generally, another important class of random sources, known as the
microlocally isotropic Gaussian random fields, is considered in
\cite{LPS08,LHL,LL19,LLM,LW,LW2}. The covariance operators of the random fields
are assumed to be pseudo-differential operators with principal symbol
$\mu(x)|\xi|^{-m}$, where the nonnegative function $\mu\in C_0^{\infty}(D)$ is
called the micro-correlation strength of the random source and is the statistics
to be determined. It is shown in \cite{LW} that the microlocally isotropic
Gaussian random field is equivalent to the generalized fractional Gaussian
random field in the form
\[
f=\sqrt{\mu}(-\Delta)^{-\frac m4}\dot{W},
\]
which is a distribution in $W^{\frac{m-d}2-\epsilon,p}(\mathbb R^d)$ for
$m\in(-\infty,d]$ (cf. Lemma \ref{lm:iso}) and apparently degenerates to the
white noise if $m=0$. In this case, the increments of the random source are not
independent if $m\neq0$, and thus the It\^o isometry is not applicable any
more. Instead, the microlocal analysis for large frequencies is applied to
reconstruct the micro-correlation strength $\mu$ involved in the principal
symbol of the covariance operator of $f$. In \cite{LW}, the $d$-dimensional
Helmholtz equation with attenuation is studied with $d=2,3$, $p\in(\frac
d2,2]$ and $m\in(d(\frac2p+1)-2,d]$. We refer to \cite{LHL,LL19} for the
study on the Helmholtz equation without attenuation and the elastic wave
equation, to \cite{LLM} for the study on the Schr\"odinger equation, and to
\cite{LW2} for the study on Maxwell's equations. In all of the existing results,
the random source under consideration is smoother than the white noise, i.e.,
$m>0$, due to the singularity of Green's functions of the considered models. 

In this work, we consider the one-dimensional stochastic Helmholtz equation
(\ref{eq:model}) with attenuation, where $f$ is assumed to be a microlocally
isotropic Gaussian random field with $m\in(-\frac2q,1]$ and $q\in(1,\infty)$.
We point out that such a random source model includes the white noise case with
$m=0$ and is even allowed to be rougher than the white noise for
$m\in(-\frac2q,0)$. The direct scattering problem is shown to be well-posed in
the distribution sense and has a unique solution $u\in
W^{\gamma,q}_{loc}(\mathbb R)$ with $\gamma\in(\frac{1-m}2,\frac12+\frac1q)$. 
For the inverse scattering problem, we prove that the strength $\mu$ of the
random source is uniquely determined by the high frequency limit of the energy
of the wave field $u$ on an bounded measurement interval $U\subset\mathbb
R\backslash\overline{D}$. In particular, for the white noise case, the
measurement data at a single frequency is enough to uniquely determine the
strength $\mu$ by utilizing the It\^o isometry. Numerical experiments are
presented for the white noise model to demonstrate the validity and
effectiveness of the proposed method.

The paper is organized as follows. In Section \ref{dsp}, the microlocally
isotropic random source is introduced. The well-posedness of the direct
scattering problem in the distribution sense is given based on the regularity of
the fundamental solution. Section \ref{isp} concerns the inverse scattering
problem. The uniqueness is addressed for the reconstruction of the strength of
the random source. As a special case of the microlocally isotropic random
source, the white noise model is studied in Section \ref{wn}. Numerical
experiments are presented in Section \ref{ne} to demonstrate the effectiveness
of the proposed method. The paper is concluded with some general remarks in
Section \ref{c}.

\section{Direct scattering problem}\label{dsp}

In this section, we introduce the model of the random source and present the
well-posedness and stability of the solution for the direct scattering problem. 

\subsection{Random sources}

The source $f$ is assumed to be a microlocally isotropic Gaussian random
field which satisfies the following conditions with dimension $d=1$.

\begin{assumption}\label{as:f}
Let $f$ be a real-valued centered microlocally isotropic Gaussian random field
of order $-m$ compactly supported in $D\subset\mathbb R^d$, i.e., the
covariance operator of $f$ is a pseudo-differential operator whose principal
symbol has the form $\mu(x)|\xi|^{-m}$ with the micro-correlation strength
$\mu\in C_0^{\infty}(D)$ and $\mu\ge0$.
\end{assumption}

It is shown in \cite[Proposition 2.5]{LW} that the generalized Gaussian
random field
\[
f(x)=\sqrt{\mu(x)}(-\Delta)^{-\frac m4}\dot{W}
\]
satisfies Assumption \ref{as:f} with order
$-m$, where $\dot{W}$ is the white noise and $(-\Delta)^{-\frac m4}$ is a
fractional Laplacian. Consequently, the regularity of random fields satisfying
Assumption \ref{as:f} can be obtained by investigating the regularity of the
generalized Gaussian random fields, which is stated in the following lemma (cf.
\cite{LW}).

\begin{lemma}\label{lm:iso}
Let $f$ be a microlocally isotropic Gaussian random field of order $-m$ compactly supported in $D\subset\mathbb R^d$.
\begin{itemize}
\item[(i)] If $m\in(d,d+2)$, then $f\in C^{\alpha}(D)$ almost surely for all $\alpha\in(0,\frac{m-d}2)$.
\item[(ii)] If $m\in(-\infty,d]$, then $f\in W^{\frac{m-d}2-\epsilon,p}(D)$ almost surely for all $\epsilon>0$ and $p\in(1,\infty)$.
\end{itemize}
\end{lemma}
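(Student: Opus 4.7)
The plan is to exploit the representation $f=\sqrt{\mu}(-\Delta)^{-m/4}\dot W$ from \cite[Proposition 2.5]{LW}, which reduces the regularity of an arbitrary microlocally isotropic field satisfying Assumption \ref{as:f} to that of the single generalized field $g:=(-\Delta)^{-m/4}\dot W$ multiplied by a smooth compactly supported factor. Since $\sqrt\mu\in C_0^\infty(D)$, multiplication by $\sqrt\mu$ preserves both $C^\alpha$ and $W^{s,p}$ regularity on $D$, so I would concentrate on establishing the regularity of $g$ on a neighbourhood of $\overline D$ and then transfer the bound to $f$ at the end.

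For part (i), with $m\in(d,d+2)$, the field $g$ is a centred stationary Gaussian field, and a Fourier computation yields the increment bound
\[
\mathbb E|g(x)-g(y)|^2 = c_d\int_{\mathbb R^d}|e^{{\rm i}\xi\cdot x}-e^{{\rm i}\xi\cdot y}|^2|\xi|^{-m}\,d\xi \lesssim |x-y|^{m-d},
\]
the integral being convergent precisely because $m>d$ guarantees integrability at infinity and $m<d+2$ guarantees integrability at the origin (via $|e^{{\rm i}\xi\cdot x}-e^{{\rm i}\xi\cdot y}|^2\lesssim |\xi|^2|x-y|^2$). By Gaussianity, all even moments obey $\mathbb E|g(x)-g(y)|^{2n}\lesssim|x-y|^{n(m-d)}$, and the Kolmogorov--Chentsov continuity criterion then produces a modification of $g$ with sample paths in $C^\alpha$ for every $\alpha<(m-d)/2-d/(2n)$; sending $n\to\infty$ exhausts the full range $\alpha\in(0,(m-d)/2)$.

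For part (ii), with $m\leq d$, $g$ must be viewed as a distribution, and I would combine the almost sure Sobolev regularity of white noise with the smoothing/roughening action of the fractional Laplacian. The starting point is $\dot W\in W^{-d/2-\epsilon,p}_{loc}(\mathbb R^d)$ almost surely for every $\epsilon>0$ and $p\in(1,\infty)$, which follows from a Littlewood--Paley decomposition combined with Gaussian hypercontractivity on each dyadic block. Next I would show that $(-\Delta)^{-m/4}$ lifts the Sobolev exponent by $m/2$: away from $\xi=0$ its symbol $|\xi|^{-m/2}$ coincides with the Bessel symbol $\langle\xi\rangle^{-m/2}$, so after a low-/high-frequency cutoff the operator splits into a smooth convolution piece plus a Bessel potential $(I-\Delta)^{-m/4}$, both acting boundedly on $W^{s,p}$. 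This yields $g\in W^{(m-d)/2-\epsilon,p}_{loc}$, and multiplication by $\sqrt\mu$ finishes part (ii).

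The main technical obstacle is the low-frequency singularity of the symbol $|\xi|^{-m/2}$ in the smoothing regime $m>0$, where $(-\Delta)^{-m/4}$ is not pseudo-differential in the standard H\"ormander class; the remedy is the low-/high-frequency splitting indicated above, together with the compact support of $\sqrt\mu$, which absorbs the non-local low-frequency contribution into a smooth remainder. Once that splitting is justified, all remaining ingredients---Gaussian moment bounds, $L^p$ boundedness of Bessel potentials, and the stability of $W^{s,p}$ under smooth compactly supported multipliers---are standard.
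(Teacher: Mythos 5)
The paper does not actually prove this lemma---it cites \cite{LW}---but the reduction it gestures at is exactly the one you adopt, and the two halves of your argument are sound in themselves: the Kolmogorov--Chentsov computation with the spectral increment bound $\mathbb E|g(x)-g(y)|^2\lesssim|x-y|^{m-d}$ (convergent precisely for $m\in(d,d+2)$) for part (i), and the white-noise regularity $\dot W\in W^{-d/2-\epsilon,p}_{loc}$ combined with the $\frac m2$-order lift of $(-\Delta)^{-m/4}$, after a low-/high-frequency splitting, for part (ii).

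The genuine gap is in your opening reduction. \cite[Proposition 2.5]{LW} asserts that $\sqrt{\mu}(-\Delta)^{-m/4}\dot W$ \emph{is} a microlocally isotropic field of order $-m$; it does not assert that every field satisfying Assumption \ref{as:f} has this form. The assumption only fixes the \emph{principal} symbol of the covariance operator, so a general $f$ differs from the model field through the lower-order part $c(x,\xi)-\mu(x)|\xi|^{-m}\in S^{-m-1}$, and for a Gaussian field one cannot simply split the sample path according to a splitting of the covariance into two pieces (the pieces need not be covariances of independent summands of $f$). The repair is to run both estimates on the covariance kernel $K_f$ of (\ref{eq:Kf}) directly: for (i), bound $\mathbb E|f(x)-f(y)|^2=K_f(x,x)-2K_f(x,y)+K_f(y,y)$ by writing $c=c^p+r$ with $r\in S^{-m-1}$, the principal part reproducing your $|x-y|^{m-d}$ bound and the remainder contributing a strictly better power; for (ii), bound $\mathbb E\|f\|_{W^{s,p}}^p$ through the same symbol splitting. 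With that modification the rest of your outline, including the treatment of the low-frequency singularity of $|\xi|^{-m/2}$ via the compact support of the field, goes through.
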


Let $\mathcal{D}$ be the space $C_0^{\infty}(\mathbb R^d)$ equipped with a locally convex topology, and $\mathcal{D}'$ be its dual space. Based on Lemma \ref{lm:iso}, if $m\le d$, the random source $f$ should be interpreted as a distribution in $\mathcal{D}'$. Its mean value function, denoted by $E_f$, and covariance operator, denoted by $Q_f$, are defined as follows:
\begin{eqnarray*}
\langle E_f,\varphi\rangle:&=&\mathbb E\langle f,\varphi\rangle\quad \forall~\varphi\in\mathcal{D},\\
\langle\varphi,Q_f\psi\rangle:&=&\mathbb E[\langle f,\varphi\rangle\langle f,\psi\rangle]\quad\forall~\varphi,\psi\in\mathcal{D},
\end{eqnarray*}
where $\langle\cdot,\cdot\rangle$ denotes the dual product. 
According to the Schwartz kernel theorem (cf. \cite[Theorem 5.2.1]{H03}), there exists a unique kernel $K_f$ for $Q_f$ such that 
\begin{equation}\label{eq:Qf}
\langle\varphi,Q_f\psi\rangle=\int_{\mathbb R^d}\int_{\mathbb R^d}K_f(x,y)\varphi(x)\psi(y)dxdy.
\end{equation}

If $f$ satisfies Assumption \ref{as:f}, then its covariance operator $Q_f$ is a
pseudo-differential operator with the principal symbol given by
$\mu(x)|\xi|^{-m}$, and hence (cf. \cite{H07})
\[
(Q_f\psi)(x)=\frac1{(2\pi)^d}\int_{\mathbb R^d}e^{{\rm i}x\cdot\xi}c(x,\xi)\hat\psi(\xi)d\xi,
\]
where $c(x,\xi)$ is the symbol of $Q_f$ with the leading term $\mu(x)|\xi|^{-m}$
and 
\[
\hat\psi(\xi)=\mathcal{F}[\psi](\xi)=\int_{\mathbb R^d}e^{-{\rm i}x\cdot\xi}\psi(x)dx
\]
is the Fourier transform of $\psi$.
It then holds
\begin{eqnarray*}
\langle\varphi,Q_f\psi\rangle
&=&\int_{{\mathbb{R}}^d}\varphi(x)\left[\frac1{(2\pi)^{d}}\int_{{\mathbb{R}}^d}e^
{{\rm i} x\cdot \xi}c(x,\xi)\hat\psi(\xi)d\xi\right]dx\\
&=&\frac1{(2\pi)^{d}}\int_{{\mathbb{R}}^d}\varphi(x)\int_{{\mathbb{R}}^d}e^{{
\mathbf{i}} x\cdot \xi}c(x,\xi)\left[\int_{{\mathbb{R}}^d}e^{-{\rm i}
y\cdot\xi}\psi(y)dy\right]d\xi dx\\
&=&\int_{{\mathbb{R}}^d}\int_{{\mathbb{R}}^d}\left[\frac1{(2\pi)^d}\int_{{\mathbb
{R}}^d}e^{{\rm i}(x-y)\cdot\xi}c(x,\xi)d\xi\right]\varphi(x)\psi(y)dxdy.
\end{eqnarray*}
Comparing the above equation with (\ref{eq:Qf}), we get that the kernel $K_f$ is
an oscillatory integral of the form
\begin{equation}\label{eq:Kf}
K_f(x,y)=\frac1{(2\pi)^d}\int_{{\mathbb{R}}^d}e^{{\rm i}(x-y)\cdot\xi}c(x,
\xi)d\xi,
\end{equation}
which is determined by the symbol $c(x,\xi)$.

\subsection{The fundamental solution}

Define the complex wave number $\kappa$ such that $\kappa^2=k^2+{\rm i}k\sigma$,
whose real and imaginary parts $\kappa_{\rm r}$ and $\kappa_{\rm i}$ satisfy
\[
\kappa_{\rm
r}=\left(\frac{\sqrt{k^4+k^2\sigma^2}+k^2}2\right)^{\frac12 } , \quad
\kappa_{\rm i}=\left(\frac{\sqrt{k^4+k^2\sigma^2}-k^2}2\right)^{\frac12}.
\]
It is easy to verify that 
\begin{equation}\label{eq:kappa}
 \lim_{k\to\infty}\frac{\kappa_{\rm r}}{k}=1,\quad \lim_{k\to\infty}\kappa_{\rm
i}=\frac{\sigma}{2}. 
\end{equation}

Before showing the well-posedness of the solution for the random equation
(\ref{eq:model}), we recall that the equation 
\begin{equation*}\label{eq:funda}
(\partial_{xx}+\kappa^2)\Phi_\kappa(x,y)=-\delta(x-y),\quad x,y\in\mathbb{R}
\end{equation*}
admits a unique solution 
\[
\Phi_\kappa(x,y)=\frac{{\rm i}}{2\kappa}e^{{\rm i} \kappa|x-y|},
\]
which is the fundamental solution for the one-dimensional Helmholtz equation. 

For any $n\in\mathbb N$, denote by $W^{n,p}(\mathcal{O})$ the Sobolev space equipped with the norm
\[
\|v\|_{W^{n,p}(\mathcal{O})}:=\left(\sum_{0\le\alpha\le
n}\|\partial^{\alpha}v\|_{L^p(\mathcal{O})}^p\right)^{\frac1p}.
\] 
Let $W^{n,p}_0(\mathcal{O})$ be the closure of $C_0^{\infty}(\mathcal{O})$ in
$W^{n,p}(\mathcal{O})$ and $W^{-n,q}(\mathcal{O})=(W^{n,p}_0(\mathcal{O}))'$ be 
the dual space of $W^{n,p}_0(\mathcal{O})$ with $\frac1p+\frac1q=1$. We refer to
\cite{AF03} for more details on these Sobolev spaces. 

The fundamental solution $\Phi_\kappa$ has the following regularity property.

\begin{lemma}\label{lm:Phik}
For any given $x\in\mathbb{R}$ and $p\in(1,\infty)$, it holds 
$\Phi_\kappa(x,\cdot)\in W_{loc}^{1,p}(\mathbb R).$
\end{lemma}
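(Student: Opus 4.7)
The plan is to verify directly that both $\Phi_\kappa(x,\cdot)$ and its weak derivative $\partial_y \Phi_\kappa(x,\cdot)$ are bounded, and in particular locally $L^p$, on $\mathbb R$. Fix $x \in \mathbb R$. Since $\kappa_{\rm i}>0$, one immediately has the pointwise estimate
\[
|\Phi_\kappa(x,y)| = \frac{1}{2|\kappa|}\, e^{-\kappa_{\rm i}|x-y|} \le \frac{1}{2|\kappa|},
\]
so $\Phi_\kappa(x,\cdot)\in L^\infty(\mathbb R)\subset L^p_{loc}(\mathbb R)$.

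For the derivative, I would split the $y$-axis into the two half-lines $\{y>x\}$ and $\{y<x\}$, on each of which $|x-y|$ is smooth, and compute
\[
\partial_y \Phi_\kappa(x,y)=\begin{cases} -\tfrac12\, e^{{\rm i}\kappa(y-x)}, & y>x,\\[2pt]
+\tfrac12\, e^{{\rm i}\kappa(x-y)}, & y<x.\end{cases}
\]
The key point to check is that this is in fact the distributional derivative on all of $\mathbb R$, i.e.\ no Dirac mass appears at $y=x$. This follows because $\Phi_\kappa(x,\cdot)$ is \emph{continuous} across the corner: both one-sided limits at $y=x$ equal $\tfrac{{\rm i}}{2\kappa}$, so the only possible singular contribution in $\partial_y$ is zero. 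A short integration-by-parts check against an arbitrary test function $\varphi\in C_0^\infty(\mathbb R)$ (split at $y=x$, with the boundary terms from the two pieces cancelling) formalises this.

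Thus the weak derivative is an ordinary function satisfying the pointwise bound
\[
|\partial_y \Phi_\kappa(x,y)|\le \tfrac12\, e^{-\kappa_{\rm i}|x-y|}\le \tfrac12.
\]
For any bounded interval $\mathcal O\subset \mathbb R$, this yields
\[
\|\Phi_\kappa(x,\cdot)\|_{W^{1,p}(\mathcal O)}^p
= \int_{\mathcal O}\!\bigl(|\Phi_\kappa(x,y)|^p+|\partial_y\Phi_\kappa(x,y)|^p\bigr)\,dy
\le |\mathcal O|\Bigl(\tfrac{1}{(2|\kappa|)^p}+\tfrac{1}{2^p}\Bigr)<\infty,
\]
so $\Phi_\kappa(x,\cdot)\in W^{1,p}_{loc}(\mathbb R)$ for every $p\in(1,\infty)$.

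The only nontrivial point is the justification of the distributional derivative at the corner $y=x$; everything else reduces to the elementary observation that both $\Phi_\kappa(x,\cdot)$ and its piecewise classical derivative are uniformly bounded. No explicit use of the attenuation is needed to obtain $W^{1,p}_{loc}$ (one only needs boundedness on compact sets), although the positivity of $\kappa_{\rm i}$ ensures the stronger global bound $\Phi_\kappa(x,\cdot)\in L^\infty(\mathbb R)$.
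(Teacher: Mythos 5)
Your proof is correct and follows essentially the same route as the paper: compute the piecewise classical derivative of $\Phi_\kappa(x,\cdot)$, observe that both the function and its derivative are bounded by $\tfrac{1}{2|\kappa|}$ and $\tfrac12$ respectively, and integrate over a bounded interval. The only difference is that you explicitly verify that no Dirac mass appears at the corner $y=x$ (via continuity of $\Phi_\kappa(x,\cdot)$ there), a point the paper's proof passes over silently; this is a welcome extra bit of rigor but not a change of method.
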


\begin{proof}
Let $\mathcal{O}\subset\mathbb{R}$ be any bounded interval with a finite
Lebesgue measure which is denoted by $C_{\mathcal{O}}$. It
suffices to show that $\Phi_\kappa(x,\cdot),\partial\Phi_\kappa(x,\cdot)\in
L^p(\mathcal{O})$. A simple calculation gives
\begin{eqnarray*}
\|\Phi_\kappa(x,\cdot)\|_{L^p(\mathcal{O})}^p&=&\int_{\mathcal{O}}\left|\frac{{\rm i}}{2\kappa}e^{{\rm i} \kappa|x-y|}\right|^pdy=\int_{\mathcal{O}}\left(\frac1{2|\kappa|}\right)^pe^{-p\kappa_{\rm i}|x-y|}dy\\
&\le&(2|\kappa|)^{-p}C_{\mathcal{O}}.
\end{eqnarray*}
Since the classical partial derivative of $\Phi_\kappa(x,y)$ with respect to $y$ exists, we have
\begin{equation*}
\partial_y\Phi_\kappa(x,y)=\frac{x-y}{2|x-y|}e^{{\rm i} \kappa|x-y|},\quad y\neq x.
\end{equation*}
It is clear to note
\[
\|\partial\Phi_\kappa(x,\cdot)\|_{L^p(\mathcal{O})}^p=\frac1{2^p}\int_{\mathcal{
O}}e^{-p\kappa_{\rm i}|x-y|}dy\le2^{-p}C_{\mathcal{O}},
\]
which completes the proof.
\end{proof}

\subsection{Well-posedness and regularity}

Based on the fundamental solution $\Phi_\kappa$, the volume potential 
\begin{equation}\label{eq:Hk}
(H_\kappa v)(x):=-\int_\mathbb{R}\Phi_\kappa(x,y)v(y)dy
\end{equation}
defines a mollifier $H_\kappa $.

\begin{lemma}\label{lm:bounded}
Let $\mathcal{O},\mathcal{V}\subset\mathbb{R}$ be any two bounded intervals. The
operator $H_\kappa:H^{-\beta}(\mathcal{O})\to H^\beta(\mathcal{V})$ is bounded
for $\beta\in(0,1]$.
\end{lemma}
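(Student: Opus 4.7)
My plan is to prove the two endpoint cases $\beta=0$ and $\beta=1$ by direct estimates and then recover the full range $\beta\in(0,1)$ by interpolation between the resulting bounded operators. The case $\beta=0$ is essentially immediate: the uniform bound $|\Phi_\kappa(x,y)|\le\frac{1}{2|\kappa|}$ together with the finiteness of $|\mathcal{O}|$ and $|\mathcal{V}|$ gives $\|H_\kappa v\|_{L^2(\mathcal{V})}\le C\|v\|_{L^2(\mathcal{O})}$ via Cauchy--Schwarz, mirroring the calculation in the proof of Lemma \ref{lm:Phik}.

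The nontrivial case is $\beta=1$. I would identify $v\in H^{-1}(\mathcal{O})$ with a compactly supported distribution $V\in H^{-1}(\mathbb{R})$ of comparable norm, and then invoke the standard representation $V=V_0+\partial_y V_1$ with $V_0,V_1\in L^2(\mathbb{R})$ supported in $\overline{\mathcal{O}}$ satisfying $\|V_0\|_{L^2}+\|V_1\|_{L^2}\le C\|v\|_{H^{-1}(\mathcal{O})}$. A distributional integration by parts rewrites the volume potential as
\[
(H_\kappa v)(x)=-\int_{\mathbb{R}}\Phi_\kappa(x,y)V_0(y)\,dy+\int_{\mathbb{R}}\partial_y\Phi_\kappa(x,y)V_1(y)\,dy.
\]
Thanks to the exponential decay $e^{-\kappa_{\rm i}|x-y|}$ produced by $\kappa_{\rm i}>0$, both $\Phi_\kappa(x,\cdot)$ and $\partial_y\Phi_\kappa(x,\cdot)$ belong to $L^2(\mathbb{R})$ with norm uniformly bounded in $x$, so Cauchy--Schwarz yields $\|H_\kappa v\|_{L^\infty(\mathcal{V})}\le C\|v\|_{H^{-1}(\mathcal{O})}$, and a fortiori the $L^2(\mathcal{V})$-estimate. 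For the derivative I would differentiate under the integral, use the symmetry $\partial_x\Phi_\kappa=-\partial_y\Phi_\kappa$, and exploit the fundamental-solution identity $(\partial_{yy}+\kappa^2)\Phi_\kappa(x,y)=-\delta(x-y)$ to obtain $\partial_x\partial_y\Phi_\kappa(x,y)=\delta(x-y)+\kappa^2\Phi_\kappa(x,y)$. The resulting expression for $\partial_x(H_\kappa v)(x)$ consists of a pointwise term $V_1(x)$ controlled by $\|V_1\|_{L^2(\mathcal{V})}$, together with two convolution-type terms estimated exactly as before; summing gives $\|H_\kappa v\|_{H^1(\mathcal{V})}\le C'\|v\|_{H^{-1}(\mathcal{O})}$.

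The full range $\beta\in(0,1)$ then follows by applying complex interpolation to the two bounded endpoint operators $H_\kappa:L^2(\mathcal{O})\to L^2(\mathcal{V})$ and $H_\kappa:H^{-1}(\mathcal{O})\to H^1(\mathcal{V})$, together with the standard identifications $[L^2(\mathcal{O}),H^{-1}(\mathcal{O})]_\beta=H^{-\beta}(\mathcal{O})$ and $[L^2(\mathcal{V}),H^1(\mathcal{V})]_\beta=H^\beta(\mathcal{V})$ in the Bessel potential scale on bounded intervals. The main technical obstacle is the $\beta=1$ step: one must justify the extension $v\mapsto V$, the interchange of differentiation and distributional integration across the jump singularity of $\partial_y\Phi_\kappa$ at $y=x$, and the legitimacy of the pointwise evaluation $V_1(x)$. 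These manipulations are however standard once the $L^2$-representation of $H^{-1}$ distributions and the fundamental-solution property of $\Phi_\kappa$ are in hand.
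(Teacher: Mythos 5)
Your argument is correct in outline but takes a genuinely different route from the paper. The paper never estimates the kernel in Sobolev norms directly: it starts from the classical mapping property $H_\kappa:C(\mathcal{O})\to C^2(\mathcal{V})$ (\cite[Theorem 8.1]{CK13}), equips $C(\mathcal{O})$ and $C^2(\mathcal{V})$ with the inner products of $H^{\beta-2}(\mathbb R)$ and $H^{\beta}(\mathbb R)$ after zero extension, exhibits the explicit adjoint $V=(I-\partial_{xx})\overline{H_\kappa}(I-\partial_{xx})$ via the Fourier symbol $\hat\Phi_\kappa(\xi)=(\xi^2-\kappa^2)^{-1}$, and then invokes Lax's duality theorem (\cite[Theorem 3.5]{CK13}) together with density of $C_0^\infty(\mathcal{O})$ in $H^{-1}(\mathcal{O})$; all values of $\beta\le 1$ come out at once from the single weight $(1+\xi^2)^{\beta-2}$. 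You instead prove the two endpoint bounds $L^2(\mathcal{O})\to L^2(\mathcal{V})$ and $H^{-1}(\mathcal{O})\to H^1(\mathcal{V})$ by hand and interpolate. Your route is more self-contained and makes the smoothing mechanism transparent --- the gain of exactly two derivatives is visible in the jump formula $\partial_x\partial_y\Phi_\kappa(x,y)=\delta(x-y)+\kappa^2\Phi_\kappa(x,y)$, which you compute correctly --- while the paper's route outsources the regularity gain to a known potential-theoretic result and gets the whole range of $\beta$ without interpolation. Two points in your plan need care. First, if you insist that $V_0,V_1$ be supported in $\overline{\mathcal{O}}$, your formula for $H_\kappa v$ is representation-dependent and differs from the true potential by boundary terms $\bigl[\Phi_\kappa(x,\cdot)V_1\bigr]_{\partial\mathcal{O}}$, since $\Phi_\kappa(x,\cdot)\notin H_0^1(\mathcal{O})$; the clean fix is to extend $v$ by zero and take the global representation $V_0=w$, $V_1=-w'$ with $w=(I-\partial_{yy})^{-1}\tilde v$ on all of $\mathbb R$, sacrificing compact support but keeping $V_0,V_1\in L^2(\mathbb R)$, which suffices because $\kappa_{\rm i}>0$ makes $\Phi_\kappa(x,\cdot)$ and $\partial_y\Phi_\kappa(x,\cdot)$ lie in $L^2(\mathbb R)$ uniformly for $x\in\mathcal{V}$, and with this choice the integration by parts produces no boundary terms. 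Second, at $\beta=\tfrac12$ the interpolation space $[L^2(\mathcal{O}),H^{-1}(\mathcal{O})]_{1/2}$ is the dual of $H^{1/2}_{00}(\mathcal{O})$ rather than of $H^{1/2}_{0}(\mathcal{O})$; since $H^{-1/2}(\mathcal{O})$ embeds into that dual, your conclusion survives, but the identification should be stated with this caveat. Neither issue is fatal, and it is worth noting that the paper only ever applies the lemma with $\beta=1$, which is precisely your harder endpoint.
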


\begin{proof}
It follows from \cite[Theorem 8.1]{CK13} that $H_\kappa$ is bounded from
$C(\mathcal{O})$ to $C^2(\mathcal{V})$ with respect to the norms 
\[
\|v\|_{C(\mathcal{O})}:=\sup_{x\in\mathbb R}|v(x)|\quad \forall~v\in C(\mathcal{O})
\] 
and 
\[
\|v\|_{C^2(\mathcal{V})}:=\sum_{m=0}^2\sup_{x\in\mathbb R}|v^{(m)}(x)|\quad\forall~v\in C^2(\mathcal{V}).
\]
Define spaces $X:=C(\mathcal{O})$ and $Y:=C^2(\mathcal{V})$ with the scalar
products given by 
\[
(g_1,g_2)_X:=(\tilde g_1,\tilde
g_2)_{H^{\beta-2}(\mathbb{R})}\quad\forall\,g_1, g_2\in X
\]
and
\[
(h_1,h_2)_Y:=(\tilde h_1,\tilde h_2)_{H^{\beta}(\mathbb{R})}\quad \forall\,
h_1, h_2\in Y,
\]
respectively, where $\tilde g_i$ and $\tilde h_i$ are the zero extensions of
$g_i$ and $h_i$  in $\mathbb R\setminus\overline{\mathcal{O}}$ and $\mathbb
R\setminus\overline{\mathcal{V}}$, respectively. It is easy to verify that 
the products defined above satisfy 
\begin{eqnarray*}
(g_1,g_2)_X &=& (J^{\beta-2}\tilde g_1,J^{\beta-2}\tilde
g_2)_{L^2(\mathbb{R})}=\int_{\mathbb R}(1+\xi^2)^{\beta-2}\hat{\tilde
g}_1(\xi)\overline{\hat{\tilde g}_2(\xi)}d\xi\\
&\lesssim& \|\tilde g_1\|_{L^2(\mathbb R)}\|\tilde g_2\|_{L^2(\mathbb R)}
\lesssim\|g_1\|_{C(\mathcal{O})}\|g_2\|_{C(\mathcal{O})}
\end{eqnarray*}
and
\begin{eqnarray*}
(h_1,h_2)_Y&=&(J^{\beta}\tilde h_1,J^{\beta}\tilde h_2)_{L^2(\mathbb{R})}=\int_{\mathbb R}(1+\xi^2)^{\beta}\hat{\tilde
h}_1(\xi)\overline{\hat{\tilde h}_2(\xi)}d\xi\\
&\lesssim&\|\tilde h_1\|_{H^\beta(\mathbb R)}\|\tilde h_2\|_{H^\beta(\mathbb R)}
\lesssim\|h_1\|_{C^2(\mathcal{V})}\|h_2\|_{C^2(\mathcal{V})}.
\end{eqnarray*}

We claim that there exists a bounded operator $V:Y\to X$ defined by 
\[
V:=(I-\partial_{xx})\overline{H_\kappa}(I-\partial_{xx}),
\]
where 
\[
(\overline{H_\kappa}v)(x):=-\int_{\mathbb R}\overline{\Phi_\kappa(x,y)}v(y)dy
\]
and $\overline{H_\kappa}v=\overline{H_\kappa v}$ for any real valued function $v$, 
such that
\[
(H_\kappa g,h)_Y=(g,Vh)_X\quad\forall\,g\in X, h\in Y.
\]
In fact, for any $h\in Y$, 
\begin{eqnarray*}
\|Vh\|_{C(\mathcal{O})}&=&\|(I-\partial_{xx})\overline{H_\kappa} (I-\partial_{xx})h\|_{C(\mathcal{O})}
\lesssim\|H_\kappa (I-\partial_{xx})h\|_{C^2(\mathcal{O})}\\
&\lesssim& \|(I-\partial_{xx})h\|_{C(\mathcal{V})}\lesssim\|h\|_{C^2(\mathcal{V})}.
\end{eqnarray*}
Furthermore,
\begin{eqnarray*}
(H_\kappa g,h)_Y&=&(J^\beta H_\kappa \tilde g,J^\beta\tilde h)_{L^2(\mathbb R)}=(H_\kappa \tilde
g,J^{2\beta}\tilde h)_{L^2(\mathbb R)}\\
&=&(\widehat{H_\kappa \tilde g},\widehat{J^\beta\tilde h})_{L^2(\mathbb R)}
=\int_{\mathbb R}\hat\Phi_\kappa(\xi)\hat{\tilde g}(\xi)(1+\xi^2)^\beta\overline{\hat{\tilde h}(\xi)}d\xi\\
&=&\int_{\mathbb R}\hat{\tilde
g}(\xi)(1+\xi^2)^{\beta-2}\overline{\left[
(1+\xi^2)\overline{\hat\Phi_\kappa(\xi)}(1+\xi^2)\hat{\tilde h}(\xi)\right]}d\xi\\
&=&\int_{\mathbb R}\hat{\tilde
g}(\xi)(1+\xi^2)^{\beta-2}\overline{\widehat{V\tilde h}(\xi)}d\xi
=(J^{\beta-2}\tilde g,J^{\beta-2}V\tilde h)_{L^2(\mathbb R)}\\
&=&(g,Vh)_X,
\end{eqnarray*}
where $\hat\Phi_\kappa$ is the Fourier transform of $\Phi_\kappa(x,y)$ with respect to
$x-y$ and satisfies $-\xi^2\hat\Phi_\kappa(\xi)+\kappa^2\hat\Phi_\kappa(\xi)=-1$. The claim
is proved. 

It follows from the claim and  \cite[Theorem 3.5]{CK13} that $H_\kappa :X\to Y$ is
bounded with respect to the norms induced by the scalar products on $X$ and
$Y$. More precisely, we have 
\begin{equation}\label{eq:Hkineq}
\|H_\kappa g\|_Y=\|H_\kappa g\|_{H^\beta(\mathcal{V})}\lesssim\|g\|_X=\|g\|_{H^{\beta-2}(\mathcal{O})}\le\|g\|_{H^{-\beta}(\mathcal{O})}
\end{equation}
for any $g\in X$ and $\beta\le1$. It then suffices to show that (\ref{eq:Hkineq}) also holds for any $g\in H^{-\beta}(\mathcal{O})$.
Noting that the subspace $C_0^\infty(\mathcal{O})\subset X$ is dense in $L^2(\mathcal{O})$ (cf. \cite[Section 2.30]{AF03}) and 
$
H^{-1}(\mathcal{O})=\overline{L^2(\mathcal{O})}^{\|\cdot\|_{H^{-1}(\mathcal{O})}}
$
(cf. \cite[Section 3.13]{AF03}), we get that (\ref{eq:Hkineq}) holds for any $g\in
H^{-1}(\mathcal{O})$, and hence for any $g\in H^{-\beta}(\mathcal{O})$ since $H^{-\beta}(\mathcal{O})\subset
H^{-1}(\mathcal{O})$. 
\end{proof}

Now we are able to show the well-posedness of (\ref{eq:model}) in the distribution sense.

\begin{theorem}\label{tm:wellposed}
Let $q\in(1,\infty)$ and Assumption \ref{as:f} hold with $m\in(-\frac2q,1]$. The scattering problem (\ref{eq:model}) has a unique solution 
\[
u(x)=-\int_{D}\Phi_\kappa(x,y)f(y)dy
\]
in the distribution sense, and $u\in W^{\gamma,q}_{loc}(\mathbb R)$ almost surely with $\gamma\in(\frac{1-m}2,\frac12+\frac1q)$.
\end{theorem}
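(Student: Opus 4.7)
My plan is to verify in sequence that (i) the stated formula defines a distribution and (ii) solves the equation in $\mathcal D'$, then (iii) to establish its Sobolev regularity, and finally (iv) to prove uniqueness.

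\textbf{Existence and the equation.} Since $f$ is a compactly supported distribution in $D$, the convolution $u=-\Phi_\kappa*f$ is unambiguously defined as an element of $\mathcal D'(\mathbb R)$; the integral formula $u(x)=-\int_D\Phi_\kappa(x,y)f(y)\,dy$ is shorthand for the distributional pairing $-\langle f,\Phi_\kappa(x,\cdot)\rangle$, legitimized by the regularities $f\in W^{(m-1)/2-\epsilon,p}(D)$ from Lemma \ref{lm:iso}(ii) and $\Phi_\kappa(x,\cdot)\in W^{1,p'}_{loc}(\mathbb R)$ from Lemma \ref{lm:Phik} (with $p$ chosen large enough that $\frac{1-m}{2}+\epsilon\le 1$). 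That $u''+\kappa^2 u=f$ in $\mathcal D'$ follows from the fundamental-solution identity $(\partial_{yy}+\kappa^2)\Phi_\kappa(x,y)=-\delta(x-y)$ after pairing with any $\varphi\in C^\infty_0(\mathbb R)$ and applying Fubini to swap the $x$- and $y$-pairings.

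\textbf{Regularity.} I would invoke Lemma \ref{lm:bounded}, together with the sharper estimate $\|H_\kappa g\|_{H^\beta(\mathcal V)}\lesssim\|g\|_{H^{\beta-2}(\mathcal O)}$ displayed in its proof and valid for $\beta\in(0,1]$. Applied with $\beta=\min\{1,(m+3)/2-\epsilon\}$ to $f\in H^{(m-1)/2-\epsilon}(D)$, this yields $u\in H^\beta_{loc}(\mathbb R)$. To transfer this $L^2$-based regularity to $W^{\gamma,q}_{loc}$ I would then apply the one-dimensional Sobolev embeddings: for $q\ge 2$, $H^\beta\hookrightarrow W^{\gamma,q}$ provided $\beta-\tfrac 12\ge\gamma-\tfrac 1q$; for $q\le 2$, the embedding $L^2\hookrightarrow L^q$ on bounded intervals yields $H^\beta_{loc}\subset W^{\beta,q}_{loc}$ at once. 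Letting $\beta$ approach its cap and $\epsilon\downarrow 0$ should exhaust every $\gamma\in\bigl(\tfrac{1-m}{2},\tfrac 12+\tfrac 1q\bigr)$.

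\textbf{Uniqueness.} If $u_1,u_2$ are two distributional solutions, then $v=u_1-u_2$ satisfies $v''+\kappa^2 v=0$ in $\mathcal D'(\mathbb R)$ together with the two radiation conditions. Hypoellipticity of the elliptic constant-coefficient operator $\partial_{xx}+\kappa^2$ forces $v\in C^\infty$, so $v(x)=Ae^{{\rm i}\kappa x}+Be^{-{\rm i}\kappa x}$ as a classical ODE solution. Substituting into the two boundary conditions yields a homogeneous $2\times 2$ linear system in $(A,B)$ whose determinant is nonzero (thanks to the strict positivity of $\kappa_{\rm i}$), forcing $A=B=0$.

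The main obstacle I anticipate is the regularity step. In the regime $q<2$ with $m$ near $-2/q$, the lower endpoint $\tfrac{1-m}{2}$ approaches or exceeds $1$ and Lemma \ref{lm:bounded}'s cap $\beta\le 1$ becomes binding, so the $L^2\hookrightarrow L^q$ passage alone cannot reach the stated upper limit $\tfrac 12+\tfrac 1q$ for $\gamma$. Sweeping out the entire interval will require a delicate coordination between the choice of $\beta$, the size of $\epsilon$, and the Sobolev embedding used, which is where the bulk of the technical work will sit.
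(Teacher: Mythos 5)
Your existence and distributional-equation steps match the paper's, and your uniqueness argument (hypoellipticity, explicit exponential solutions, nonvanishing $2\times 2$ determinant) is a legitimate and more elementary alternative to the paper's route, which instead integrates the homogeneous solution against $\Phi_\kappa$ over $B_r$ and lets $r\to\infty$ using the radiation condition. The substantive problem is the regularity step, and it is precisely the difficulty you flag at the end: your argument does not close. By working only in the $L^2$ scale on the input side, you take $f\in H^{\frac{m-1}{2}-\epsilon}(D)$ and need $\beta\ge\frac{1-m}{2}+\epsilon$ to apply Lemma \ref{lm:bounded}; since that lemma caps $\beta$ at $1$, your argument breaks down whenever $m\le -1$, which is a nonempty part of the hypothesis $m\in(-\frac{2}{q},1]$ as soon as $q<2$. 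In that regime $f\notin H^{-1}(D)$, so no choice of $\beta$ and $\epsilon$ rescues the $L^2$-based application of the lemma, and the subsequent embedding into $W^{\gamma,q}$ cannot reach $\gamma$ up to $\frac12+\frac1q$ either.

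The missing idea, which is how the paper proceeds, is to exploit the full $L^p$-scale regularity of $f$ from Lemma \ref{lm:iso} with $p$ the H\"older conjugate of $q$ (so $p$ is large exactly when $q$ is small), rather than $p=2$. One then sandwiches $H_\kappa$ between the two Sobolev embeddings $W^{-\gamma,p}(D)\hookrightarrow H^{-1}(D)$ and $H^{1}(K)\hookrightarrow W^{\gamma,q}(K)$ and applies Lemma \ref{lm:bounded} with $\beta=1$ only. The point is that the first embedding (dual to $H^1_0\hookrightarrow W^{\gamma,q}_0$, requiring $\gamma\le\frac12+\frac1q$) together with $f\in W^{\frac{m-1}{2}-\epsilon,p}(D)\subset W^{-\gamma,p}(D)$ (requiring $\gamma\ge\frac{1-m}{2}+\epsilon$) is simultaneously satisfiable precisely when $\frac{1-m}{2}<\frac12+\frac1q$, i.e.\ $m>-\frac{2}{q}$: the hypothesis on $m$ is calibrated to this dual pair of embeddings, not to the $L^2$ scale. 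Without this change of exponent your proof covers only $q\ge 2$ (equivalently $m>-1$), so as written it proves a strictly weaker statement.
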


\begin{proof}
We first show that the volume potential 
\[
u(x)=-\int_{D}\Phi_\kappa(x,y)f(y)dy=(H_\kappa f)(x)
\]
is well-defined in $W^{\gamma,q}_{loc}(\mathbb R)$, i.e., $u\in
W^{\gamma,q}(K)$ for any compact subset $K\subset\mathbb R$. It follows from
the Kondrachov embedding theorem that the following embeddings
\[
W^{-\gamma,p}(D)\hookrightarrow H^{-\beta}(D),\quad H^\beta(K)\hookrightarrow W^{\gamma,q}(K)
\]
with $\beta=1$ and $\frac1p+\frac1q=1$ are compact. Hence,
$H_\kappa:W^{-\gamma,p}(D)\to W^{\gamma,q}(K)$ is bounded based on Lemma
\ref{lm:bounded}. By Lemma \ref{lm:iso}, it is clear to note that $f\in
W^{\frac{m-1}2-\epsilon,p}(D)\subset W^{-\gamma,p}(D)$. As a result, $u=H_\kappa
f\in W^{\gamma,q}(K)$.

Next, we prove that $u=H_\kappa f$ is a solution to (\ref{eq:model}) in the distribution sense. For any test function $v\in W^{\gamma,q}(\mathbb R)$, it holds
\begin{eqnarray*}
&&\langle u''+\kappa^2u,v\rangle=-\langle u',
v' \rangle+\kappa^2\langle u,v\rangle\\
&=&\int_{\mathbb{R}}\partial_x \Big[\int_{D}\Phi_\kappa(x,
y)f(y)dy \Big]
v'(x)dx -\kappa^2\int_{\mathbb{R}}\Big[\int_{D}\Phi_\kappa(x,
y)f(y)dy\Big] v(x)dx\\
&=&-\int_{D}\int_{\mathbb{R}}\partial_{xx}\Phi_\kappa(x,
y)v(x)f(y)dxdy -\kappa^2\int_{\mathbb{R}}\Big[\int_{D}
\Phi_\kappa(x,y)f(y)dy\Big] v(x)dx\\
&=&\int_{D}\int_{\mathbb{R}}\left(\kappa^2\Phi_\kappa(x,
y)+\delta(x-y)\right)v(x)f(y)dxdy\\
& &-\kappa^2\int_{\mathbb{R}}\Big[\int_{D}\Phi_\kappa(x,y)f(y)dy\Big] v(x)dx\\
&=&\langle f,v\rangle.
\end{eqnarray*}

The uniqueness of the solution of (\ref{eq:model}) can be proved by
showing that (\ref{eq:model}) has only the zero solution if $f\equiv0$. Let $ u^0$ be any
solution of (\ref{eq:model}) with $f\equiv0$ in the distribution sense. Then $ u^0$ satisfies
\[
(u^0)''+\kappa^2 u^0=0
\]
in the distribution sense. Denote $B_r=(-r,r)$. It is shown in Lemma \ref{lm:Phik} that $\Phi_\kappa(x,\cdot)\in W^{1,p'}(B_r)\hookrightarrow W^{\gamma,q}(B_r)$ for some $p'>1$ satisfying $\frac1{p'}-(1-\gamma)<\frac1q$.
It then indicates that $1_{B_r} \Phi_\kappa(x,\cdot)\in W^{\gamma,q}(\mathbb R)$, where $1_{B_r}$ denotes the characteristic function.
Hence, we get
\begin{equation}\label{eq:u0}
\int_{\mathbb R} \Phi_\kappa(x,z)\left[(u^0)''(z)+\kappa^2 u^0(z)\right]dz
=0.
\end{equation}
Define the operator $T$ by
\[
(T\psi)(x):=\int_{B_r}\Phi_\kappa(x,z)[\psi''(z)+\kappa^2\psi(z)]dz\quad\forall
\,\psi\in\mathcal{D}.
\] 
Following the similar arguments as those in the proof of \cite[Lemma 4.3]{LHL}
and using the integration by parts, we obtain
\begin{eqnarray*}
(T\psi)(x)=-\psi(x)+\left[\Phi_\kappa(x,z)\psi'(z)-\partial_z\Phi_\kappa(x,z)\psi(z)\right]\big|_{z=-r}^r.
\end{eqnarray*}
Then (\ref{eq:u0}) leads to
\begin{equation*}
-u^0(x)+\lim_{r\to\infty}\left[\Phi_\kappa(x,z)(u^0)'(z)-\partial_z\Phi_\kappa(x,z)u^0(z)\right]\big|_{z=-r}^r
=0.
\end{equation*}
Applying the radiation condition, we get
$u^0\equiv0$, which completes the proof.
\end{proof}

\section{Inverse scattering problem}\label{isp}

By Theorem \ref{tm:wellposed}, the solution of
(\ref{eq:model}) has the form
\begin{equation}\label{eq:solu}
u(x)=\frac1{2{\rm i}\kappa}\int_{D}e^{{\rm i} \kappa|x-y|}f(y)dy.
\end{equation}
We show that the micro-correlation strength $\mu$ is uniquely determined by the
variance of the solution $u$.

\begin{theorem}\label{tm:inverse}
Let $f$ be a random source satisfying Assumption \ref{as:f} and $U\subset\mathbb
R\setminus\overline{D}$ be a bounded open interval. Then for any$x\in U$, 
\[
\lim_{k\to\infty}4k^{m+2}{\mathbb{E}}|u(x)|^2=\int_{D}e^{-\sigma|x-y|}\mu(y)dy=:T(x).
\]
\end{theorem}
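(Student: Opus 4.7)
The plan is to compute $\mathbb E|u(x)|^2$ explicitly against the covariance kernel $K_f$ of $f$, and then expose its leading $k\to\infty$ behavior by inserting the oscillatory-integral representation \eqref{eq:Kf} of the pseudo-differential kernel and applying Plancherel. Since $x\in U\subset\mathbb R\setminus\overline D$, the phase $|x-y|$ has a fixed sign for $y\in D$; without loss of generality assume $x>1$, so that $|x-y|=x-y$ throughout $D$. By \eqref{eq:solu}, $u(x)=\langle f,\varphi_x\rangle$ with $\varphi_x(y)=\frac{1}{2i\kappa}e^{i\kappa(x-y)}\chi_D(y)$, and the bilinear definition \eqref{eq:Qf} of $Q_f$ (extended to complex test functions using that $f$ is real-valued) yields
\[
\mathbb E|u(x)|^2=\frac{e^{-2\kappa_i x}}{4|\kappa|^2}\int_D\!\int_D K_f(y,z)\,e^{-i\kappa y+i\bar\kappa z}\,dy\,dz,
\]
after factoring out $\kappa-\bar\kappa=2i\kappa_i$.

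Substituting \eqref{eq:Kf} with $c(y,\xi)=\mu(y)|\xi|^{-m}+r(y,\xi)$, where $r$ has order at most $-m-1$, and changing variables $\xi=\kappa_r+\eta$, the principal-part contribution factorises into the Fourier transforms $\widehat{\mu e^{\kappa_i\cdot}\chi_D}(-\eta)$ and $\widehat{e^{\kappa_i\cdot}\chi_D}(\eta)$. As $\kappa_r\to\infty$, $|\kappa_r+\eta|^{-m}\to\kappa_r^{-m}$ pointwise, and Plancherel collapses the $\eta$-integration to
\[
\frac{1}{2\pi}\int_{\mathbb R}\widehat{\mu e^{\kappa_i\cdot}\chi_D}(-\eta)\,\widehat{e^{\kappa_i\cdot}\chi_D}(\eta)\,d\eta=\int_D\mu(y)\,e^{2\kappa_i y}\,dy.
\]
Combining with the $e^{-2\kappa_i x}$ prefactor gives
\[
\mathbb E|u(x)|^2=\frac{\kappa_r^{-m}}{4|\kappa|^2}\int_D\mu(y)\,e^{-2\kappa_i(x-y)}\,dy\;+\;o(k^{-m-2}),
\]
and multiplying by $4k^{m+2}$, invoking \eqref{eq:kappa} so that $k^{m+2}\kappa_r^{-m}/|\kappa|^2\to 1$ and $2\kappa_i\to\sigma$, followed by dominated convergence, yields $4k^{m+2}\mathbb E|u(x)|^2\to\int_D\mu(y)e^{-\sigma(x-y)}dy=T(x)$. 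The case $x<0$ is symmetric.

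The main technical obstacle is justifying the replacement $|\kappa_r+\eta|^{-m}\mapsto\kappa_r^{-m}$ inside the $\eta$-integral uniformly in $k$. On the bounded region $|\eta|\le\kappa_r/2$ this is simply uniform convergence; on the complement one exploits the Schwartz-class decay of $\widehat{\mu e^{\kappa_i\cdot}\chi_D}$ (a consequence of $\mu\in C_0^\infty(D)$), which dominates both the polynomial growth $|\kappa_r+\eta|^{|m|}$ when $m<0$ and the $O(|\eta|^{-1})$ decay of $\widehat{e^{\kappa_i\cdot}\chi_D}$. The integrable singularity of $|\xi|^{-m}$ at $\xi=0$ (i.e.\ $\eta=-\kappa_r$) is harmless for $m\in(0,1]$ because the Schwartz decay of $\widehat{\mu e^{\kappa_i\cdot}\chi_D}$ forces the integrand to be negligible there. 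Finally, the remainder symbol $r(y,\xi)$ contributes an oscillatory integral of order at most $-m-1$, producing an $O(k^{-m-1})$ correction that, after the $k^{m+2}/|\kappa|^2\sim 1$ rescaling, yields only an $O(k^{-1})$ error vanishing in the limit.
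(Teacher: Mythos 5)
Your overall strategy is sound and arrives at the correct leading term, but it is genuinely different in technique from the paper's proof. The paper changes variables to $g=y-z$, $h=y+z$, invokes H\"ormander's asymptotic expansion of symbols (\cite[Lemma 18.2.1]{H07}) to replace the double symbol $c_1\bigl(\tfrac{g+h}{2},x,\xi\bigr)$ by a symbol $c_2(h,x,\xi)$ independent of $g$, and then lets the $g$-integration produce a delta function that pins $\xi=\kappa_{\rm r}$; the frozen value $c_2(h,x,\kappa_{\rm r})$ immediately yields the leading term plus an $S^{-m-1}$ remainder. You instead keep the original variables, insert the oscillatory-integral kernel directly, shift $\xi=\kappa_{\rm r}+\eta$, and use Plancherel together with the smoothness of $\mu e^{\kappa_{\rm i}\cdot}$ to justify freezing $|\kappa_{\rm r}+\eta|^{-m}$ at $\kappa_{\rm r}^{-m}$. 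Both arguments are stationary-phase computations at heart; yours trades the symbolic calculus for hands-on Fourier estimates, which is more elementary but pushes the technical burden onto the dominated-convergence and remainder bounds that you only sketch (in particular, the $O(k^{-m-1})$ bound for the lower-order symbol requires integration by parts in $y$ to gain decay in $\eta$ before the $\eta$-integral can be controlled).

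There is one concrete gap you must repair. The decomposition $c(y,\xi)=\mu(y)|\xi|^{-m}+r(y,\xi)$ with $r\in S^{-m-1}$ cannot hold literally on all of $\mathbb{R}_\xi$: the principal symbol describes $c$ only as $|\xi|\to\infty$, and $\mu(y)|\xi|^{-m}$ is not smooth at $\xi=0$ (for $m=1$, which is allowed here, $|\xi|^{-1}$ is not even locally integrable in one dimension, so your $\eta$-integral through $\eta=-\kappa_{\rm r}$ diverges as a Lebesgue integral). Your remark that the singularity is ``integrable'' and hence harmless is therefore false at the endpoint $m=1$. The fix is standard: write $c(y,\xi)=\mu(y)|\xi|^{-m}\psi(\xi)+r(y,\xi)$ with a smooth cutoff $\psi$ vanishing near $\xi=0$ and equal to $1$ for large $|\xi|$; the cutoff only alters the integrand for $\eta$ near $-\kappa_{\rm r}$, where the rapid decay of $\widehat{\mu e^{\kappa_{\rm i}\cdot}}$ makes the contribution $O(\kappa_{\rm r}^{-N})$ for every $N$. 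Relatedly, your claim of ``uniform convergence'' of $|\kappa_{\rm r}+\eta|^{-m}\kappa_{\rm r}^{m}\to1$ on $|\eta|\le\kappa_{\rm r}/2$ is not accurate as stated, since that region grows with $k$ and the ratio equals $(3/2)^{-m}$ at its edge; what you actually need (and what works) is pointwise convergence for fixed $\eta$ together with the domination $|\kappa_{\rm r}+\eta|^{-m}\kappa_{\rm r}^{m}\lesssim(1+|\eta|)^{|m|}$ against the rapidly decaying Fourier factors. With these two repairs your argument is complete.
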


\begin{proof}
Since $U$ and $D$ are disjoint, we first consider the case $x>y$ for any $x\in
U$ and $y\in D$. Using (\ref{eq:solu}) and the fact that $f$ is compactly
supported in $D$, we have for any $x\in U$ that 
\begin{eqnarray*}
{\mathbb{E}}|u(x)|^2&=&\frac1{4|\kappa|^2}\int_{D}\int_{D}e^{{\rm i}\kappa|x-y|-{\rm i}\bar\kappa|x-z|}
{\mathbb{E}}[f(y)f(z)]dydz\\
&=&\frac1{4|\kappa|^2}\int_{D}\int_{D
}e^{{\rm i}\kappa(x-y)-{\rm i}\bar\kappa(x-z)}K_f(y,
z)\theta(x)dydz\\
&=&\frac1{4|\kappa|^2}\int_{\mathbb R}\int_{\mathbb R
}e^{\kappa_{\rm i}(y+z-2x)-{\rm i}\kappa_{\rm r}(y-z)}C_1(y ,
z,x)dydz,
\end{eqnarray*}
where $\theta\in C_0^{\infty}(\mathbb R)$ such that $\theta|_{U}\equiv1$ and
supp$(\theta)\subset{\mathbb{R}}\backslash\overline{D}$, 
\[
C_1(y,z,x):=K_f(y,z)\theta(x)=\frac1{2\pi}\int_{{\mathbb{R}}}e^{{\rm
i}(y-z)\xi}c_1(y,x,\xi)d\xi.
\]
Here $c_1(y,x,\xi):=c(y,\xi)\theta(x)$ and $c(y,\xi)$ is the symbol of the covariance operator of $f$. Then according to Assumption \ref{as:f}, the principal symbol of $c_1$ has the form
\[
c_1^p(y,x,\xi)=\mu(y)\theta(x)|\xi|^{-m}.
\]

First we define an invertible
transformation $\tau:{\mathbb{R}}^3\to{\mathbb{R}}^3$ by $
\tau(y,z,x)=(g,h,x)$, where 
\begin{eqnarray*}
g=y-z,\quad h=y+z.
\end{eqnarray*}
It follows from a straightforward calculation that  
\[
{\mathbb{E}}|u(x)|^2=\frac1{4|\kappa|^2}\int_{\mathbb R}\int_{\mathbb R
}e^{\kappa_{\rm i}h-2\kappa_{\rm i}x-{\rm i}\kappa_{\rm r}g}C_2(g,h,x)dgdh,
\]
where 
\begin{eqnarray*}
C_2(g,h,x)&=&C_1(\tau^{-1}(g,h,x))|\det\left((\tau^{-1})'(g,h,
x)\right)|\\
&=&\frac12C_1\left(\frac{g+h}2,\frac{h-g}2,x\right)\\
&=&\frac1{4\pi}\int_{\mathbb R}e^{{\rm i}g\xi}c_1\Big(\frac{g+h}2,x,\xi\Big)d\xi\\
&=&\frac1{4\pi}\int_{\mathbb R}e^{{\rm i}g\xi}c_2\Big(h,x,\xi\Big)d\xi.
\end{eqnarray*}
Here in the last step, we have used the following asymptotic expansion of
symbols (cf. \cite[Lemma 18.2.1]{H07}):
\begin{eqnarray*}
c_2(h,x,\xi)&= &e^{-{\rm i}\left\langle
D_g,D_{\xi}\right\rangle}c_1\Big(\frac{g+h}2,x,\xi\Big)\Big|_{g=0
}\\
&=&\sum_{j=0}^{\infty}\frac{\langle-{\rm i}
D_g,D_\xi\rangle^j}{j!}c_1\Big(\frac{g+h}2,x,\xi\Big)\Big|_{g=0}.
\end{eqnarray*}
Therefore, the principal symbol of $c_2$ has the form
\[
c_2^p(h,x,\xi)=c_1^p\Big(\frac{g+h}2,x,\xi\Big)\Big|_{g=0}=\mu\Big(\frac h2\Big)|\xi|^{-m}\theta(x),
\]
and the residual $r_2=c_2-c_2^p\in S^{-m-1}$.
 
Combining the above equations leads to
\begin{eqnarray*}
{\mathbb{E}}|u(x)|^2&=&\frac1{4|\kappa|^2}\int_{\mathbb R}\int_{\mathbb R
}e^{\kappa_{\rm i}h-2\kappa_{\rm i}x-{\rm i}\kappa_{\rm r}g}C_2(g,h,x)dgdh\\
&=&\frac1{4|\kappa|^2}\int_{\mathbb R}\int_{\mathbb R
}e^{\kappa_{\rm i}h-2\kappa_{\rm i}x-{\rm i}\kappa_{\rm r}g}\left[\frac1{4\pi}\int_{\mathbb R}e^{{\rm i}g\xi}c_2\Big(h,x,\xi\Big)d\xi\right]dgdh\\
&=&\frac1{8|\kappa|^2}\int_{\mathbb{R}}\int_{\mathbb{R}}e^{\kappa_{\rm i}h-2\kappa_{\rm i}x}\left[\frac1{2\pi}\int_{\mathbb{R}}e^{{\rm i} g(
\xi-\kappa_{\rm r})}dg\right]c_2(h,x,\xi) d\xi dh\\
&=&\frac1{8|\kappa|^2}\int_{\mathbb{R}}e^{\kappa_{\rm i}h-2\kappa_{\rm i}x}c_2(h,x,\kappa_{\rm r})dh\\
&=&\frac1{8|\kappa|^2}\int_{\mathbb{R}}e^{\kappa_{\rm i}h-2\kappa_{\rm i}x}\bigg[\mu\Big(\frac h2\Big)\kappa_{\rm r}^{-m}\theta(x)+r_2(h,x,\kappa_{\rm r})\bigg]dh\\
&=&\frac{\theta(x)}{4|\kappa|^2\kappa_{\rm r}^m}\int_{\mathbb{R}}e^{2\kappa_{\rm i}(\zeta-x)}\mu(\zeta)d\zeta+O(\kappa_{\rm r}^{-m-1}|\kappa|^{-2}).
\end{eqnarray*}
Finally, for any $x\in U$, we have from (\ref{eq:kappa}) that 
\[
\lim_{k\to\infty}4k^{m+2}{\mathbb{E}}|u(x)|^2=\lim_{k\to\infty}\left(\frac{k^{m+2}}{|\kappa|^2\kappa_{\rm r}^m}\right)
\int_{D}e^{2\kappa_{\rm i}(\zeta-x)}\mu(\zeta)d\zeta=T(x).
\]

On the other hand, if $x<y$ for any $x\in U$ and $y\in D$, we may repeat the
same procedure as above and show that
\[
\lim_{k\to\infty}4k^{m+2}{\mathbb{E}}|u(x)|^2=\lim_{k\to\infty}\left(\frac{k^{m+2}}{|\kappa|^2\kappa_{\rm r}^m}\right)
\int_{D}e^{2\kappa_{\rm i}(x-\zeta)}\mu(\zeta)d\zeta=T(x), 
\] 
which completes the proof.
\end{proof}

Now we are in the position to show that the strength $\mu$ of the covariance operator the random source is uniquely determined by the integral given in Theorem \ref{tm:inverse}.

\begin{theorem}\label{tm:unique}
Let $\sigma>0$. The strength $\mu$ is uniquely determined by
\[
T(x)=\int_{D}e^{-\sigma|x-y|}\mu(y)dy,\quad x\in U,
\]
where $U\subset\mathbb R\backslash\overline{D}$ is a bounded interval containing points from both sides of the interval $D$.
\end{theorem}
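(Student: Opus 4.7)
The plan is a linearity argument: let $\nu=\mu_1-\mu_2\in C_0^{\infty}(D)$ for two candidate strengths yielding the same measurement, assume
\[
T_{\nu}(x):=\int_D e^{-\sigma|x-y|}\nu(y)\,dy\equiv 0\quad\text{on } U,
\]
and aim at $\nu\equiv 0$. Write $U=U_L\cup U_R$ with $U_L\subset(-\infty,0)$ and $U_R\subset(1,\infty)$; since $|x-y|=y-x$ on $U_L\times D$ and $|x-y|=x-y$ on $U_R\times D$, the vanishing of $T_{\nu}$ on the two components reduces immediately to the two scalar moment identities
\[
\int_D e^{-\sigma y}\nu(y)\,dy=0,\qquad \int_D e^{\sigma y}\nu(y)\,dy=0.
\]

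The structural point behind the kernel is that $\tfrac{1}{2\sigma}e^{-\sigma|x|}$ is the decaying Green's function of the modified Helmholtz operator $L=-\partial_{xx}+\sigma^2$, so $T_{\nu}$ satisfies $LT_{\nu}=2\sigma\nu$ distributionally on $\mathbb R$, lies in $C^{\infty}(\mathbb R)$ (since $\nu\in C_0^{\infty}(D)$), and decays at $\pm\infty$. On each component of $\mathbb R\setminus\overline D$, the function $T_{\nu}$ is a bounded element of $\ker L$, and must therefore have the form $T_{\nu}(x)=C_-e^{\sigma x}$ on $(-\infty,0)$ and $T_{\nu}(x)=C_+e^{-\sigma x}$ on $(1,\infty)$, with $C_{\pm}$ equal to the two moments above. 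The hypothesis thus yields $T_{\nu}\equiv 0$ on $\mathbb R\setminus\overline D$, and the $C^{\infty}$ extension across $\partial D$ (coming from the infinite-order vanishing of $\nu$ at the boundary of $D$) forces $T_{\nu}^{(n)}(0)=T_{\nu}^{(n)}(1)=0$ for every $n\ge 0$.

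The remaining and hardest step is to promote this information into $\nu\equiv 0$. A direct calculation using the ODE $T_{\nu}''=\sigma^2 T_{\nu}-2\sigma\nu$ on $D$ shows that once $C_{\pm}=0$, the higher-order vanishings $T_{\nu}^{(n)}(0)=T_{\nu}^{(n)}(1)=0$ follow automatically from $\nu^{(j)}(0)=\nu^{(j)}(1)=0$, so they add no new constraint beyond the two scalar identities. Closing the proof therefore requires an essentially new ingredient; my preferred route is to apply the Paley--Wiener theorem to the entire function $F(s):=\int_D e^{sy}\nu(y)\,dy$ (of exponential type at most $1$ since $\operatorname{supp}\nu\subset[0,1]$), whose prescribed zeros $F(\pm\sigma)=0$ combine with structural constraints on $\nu$ inherited from Assumption~\ref{as:f} to force $F\equiv 0$, and hence $\nu\equiv 0$ by injectivity of the Laplace transform. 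This Paley--Wiener/complex-analytic closure --- extracting enough rigidity from just two moment identities to pin down a smooth compactly supported function --- is where I expect the main obstacle of the proof to lie.
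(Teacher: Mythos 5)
Your reduction of the hypothesis $T_\nu|_U\equiv 0$ to the two scalar moment identities $\int_D e^{-\sigma y}\nu(y)\,dy=\int_D e^{\sigma y}\nu(y)\,dy=0$ is correct, as is your observation that the $C^\infty$ matching conditions at $\partial D$ add nothing further. The genuine gap is the final Paley--Wiener closure, and it is not a gap that can be filled: $F(s)=\int_D e^{sy}\nu(y)\,dy$ is entire of exponential type with only the two prescribed zeros $s=\pm\sigma$, which is a codimension-two linear condition on $\nu$, and Assumption~\ref{as:f} supplies no extra rigidity --- the difference of two admissible strengths is an essentially arbitrary element of $C_0^\infty(D)$ (take any $\nu\in C_0^\infty(D)$, $\nu\not\equiv 0$, orthogonal to $e^{\sigma y}$ and $e^{-\sigma y}$ in $L^2(D)$, and add it to a sufficiently large admissible $\mu$ to keep nonnegativity). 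Your own computation thus shows that the forward map $\mu\mapsto T|_U$ has rank two, so no complex-analytic argument can recover $\mu$ from $T|_U$ alone.

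It is worth comparing this with the paper's proof, which takes a different route: it asserts that $T=g*\mu$ with $g(x)=e^{-\sigma|x|}$ is real analytic on all of $\mathbb R$, continues $T$ from $U$ to the whole line, and then recovers $\mu$ by Fourier division, $\mathcal F[\mu]=\frac{\sigma^2+\xi^2}{2\sigma}\mathcal F[T]$. The Fourier-division step is sound once $T$ is known on all of $\mathbb R$, but the global analyticity claim is exactly what your ODE $T''=\sigma^2 T-2\sigma\mu$ contradicts: $T$ is analytic only where $\mu$ is, so analytic continuation from $U$ reaches only the unbounded components of $\mathbb R\setminus\operatorname{supp}\mu$, where $T(x)=C_-e^{\sigma x}$ and $C_+e^{-\sigma x}$ --- precisely your two moments, and nothing more. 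In other words, your attempt correctly isolates the obstruction that the paper's proof assumes away; as stated (data on $U$ disjoint from $\overline D$, a single fixed $\sigma$), the uniqueness claim appears to fail for non-analytic $\mu$, and some additional data (e.g.\ the finite-$k$, multi-frequency measurements actually used in the paper's numerics, for which the effective decay rate $\kappa_{\rm i}$ varies with $k$) is needed to restore injectivity.
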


\begin{proof}
Let $g(x):=e^{-\sigma|x|}$. Then
\begin{equation}\label{Tx}
T(x)=(g*\mu)(x),\quad x\in U,
\end{equation}
and $T=g*\mu$ is a real analytic function. Hence, the value of $T$ can be
obtained everywhere according to the analytic continuation. 
Taking the Fourier transform of (\ref{Tx}) yields
\[
\mathcal{F}[\mu](\xi)=\frac{\mathcal{F}[T](\xi)}{\mathcal{F}[g](\xi)}=\frac{\sigma^2+\xi^2}{2\sigma}\mathcal{F}[T](\xi),
\]
which implies that $\mu$ can be uniquely determined by $T$. 
\end{proof}

\section{White noise}\label{wn}

In this section, we study the inverse random source problem where the source
is driven by a white noise. Specifically, we consider a centered random source
given in the form
\[
f=\sqrt{\mu}\dot{W},
\]
where $\dot{W}$ is the real-valued spatial white noise. The diffusion function
$\sqrt{\mu}$ is assumed to be a smooth function compactly supported in the
interval $D:=(0,1)$. By Lemma \ref{lm:iso}, it holds $f\in
W^{-\frac12-\epsilon,p}(D)$ for any $\epsilon>0$ and $p\in(1,\infty)$, which has
the same regularity as the microlocally isotropic Gaussian random field with
$m=0$. Moreover, the covariance operator $Q_f$ of $f$ satisfies
\begin{eqnarray*}
\langle\varphi,Q_f\psi\rangle&=&\mathbb E[\langle\sqrt{\mu}\dot{W},\varphi\rangle\langle\sqrt{\mu}\dot{W},\psi\rangle]\\
&=&\int_0^1\mu(y)\varphi(y)\psi(y)dy,
\end{eqnarray*}
which implies that
\[
K_f(x,y)=\mu(y)\delta(x-y)=\frac1{2\pi}\int_{\mathbb R}e^{{\rm i}(x-y)\xi}\mu(x)d\xi
\]
and hence the symbol of $Q_f$ is $c(x,\xi)=\mu(x)$ according to (\ref{eq:Kf}). As a result, $f=\sqrt{\mu}\dot{W}$ satisfies Assumption \ref{as:f} with $m=0$.

In this case, the solution $u$ of (\ref{eq:model}) is expressed by
\begin{equation}\label{eq:uwhite}
u(x)=(H_\kappa f)(x)=\frac1{2{\rm i}\kappa}\int_0^1e^{{\rm i}\kappa|x-y|}\sqrt{\mu(y)}dW(y).
\end{equation}
By It\^o's formula, we get
\begin{equation*}
\mathbb{E}|u(x)|^2=\frac1{4|\kappa|^2}\int_0^1e^{-2\kappa_{\rm i}|x-y|}\mu(y)dy,
\end{equation*}
which implies the uniqueness of determining the strength $\mu$ by following 
the same procedure as that in the proof of Theorem \ref{tm:unique}.

\begin{corollary}\label{coro:unique}
Let $D:=(0,1)$. If the random source $f$ has the form $f=\sqrt{\mu}\dot{W}$ with strength $\mu\in C_0^\infty(D)$ and $\mu\ge0$, then the strength $\mu$ can be uniquely determined by the following data at any fixed wave number $k$:
\begin{equation}\label{eq:formula}
4|\kappa|^2\mathbb E|u(x)|^2=\int_0^1e^{-2\kappa_{\rm i}|x-y|}\mu(y)dy,\quad x\in U,
\end{equation}
where $U\subset\mathbb R\backslash\overline{D}$ is a bounded interval containing points from both sides of the interval $D$.
\end{corollary}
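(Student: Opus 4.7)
The plan is to combine a direct Itô isometry computation with the uniqueness mechanism of Theorem~\ref{tm:unique}. First I would establish formula (\ref{eq:formula}) from the stochastic integral representation (\ref{eq:uwhite}); then, observing that its right-hand side has exactly the same convolution structure as $T(x)$ in Theorem~\ref{tm:unique} with $\sigma$ replaced by $2\kappa_{\rm i}$, I would invoke that theorem's Fourier-division argument to recover $\mu$.

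For the first step, I would decompose the complex-valued deterministic integrand $e^{{\rm i}\kappa|x-y|}\sqrt{\mu(y)}$ in (\ref{eq:uwhite}) into its real and imaginary parts and apply Itô's isometry to each component separately. Using $|e^{{\rm i}\kappa|x-y|}|^2 = e^{-2\kappa_{\rm i}|x-y|}$ and summing the two contributions gives
\[
\mathbb{E}|u(x)|^2 \;=\; \frac{1}{4|\kappa|^2}\int_0^1 e^{-2\kappa_{\rm i}|x-y|}\mu(y)\,dy,
\]
which is exactly (\ref{eq:formula}) after multiplying through by $4|\kappa|^2$.

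For the second step, since $\sigma>0$ forces $\kappa_{\rm i}>0$, the kernel $\tilde g(x):=e^{-2\kappa_{\rm i}|x|}$ is a strictly positive $L^1$ function with Fourier transform $\mathcal{F}[\tilde g](\xi)=4\kappa_{\rm i}/(4\kappa_{\rm i}^2+\xi^2)$, which never vanishes. The right-hand side of (\ref{eq:formula}) is then precisely $(\tilde g*\mu)(x)$, and the argument of Theorem~\ref{tm:unique} applies verbatim: the convolution is real analytic, so its values on $U$ extend uniquely to all of $\mathbb{R}$ by analytic continuation, and Fourier inversion yields
\[
\mathcal{F}[\mu](\xi) \;=\; \frac{4\kappa_{\rm i}^2+\xi^2}{4\kappa_{\rm i}}\,\mathcal{F}\bigl[\,4|\kappa|^2\,\mathbb{E}|u(\cdot)|^2\,\bigr](\xi),
\]
uniquely determining $\mu$ from the measurement data.

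I expect the only mildly technical point to be the justification of Itô's isometry for a complex-valued deterministic integrand, which reduces to the standard real-valued case by splitting into real and imaginary parts. In contrast with Theorem~\ref{tm:inverse}, no high-frequency limit is required here: the white noise covariance kernel $K_f(y,z)=\mu(y)\delta(y-z)$ produces the clean convolution identity (\ref{eq:formula}) at any fixed $k$, so a single-frequency measurement on $U$ already suffices for uniqueness.
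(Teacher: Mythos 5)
Your proposal is correct and follows essentially the same route as the paper: the It\^o isometry applied to the stochastic integral representation (\ref{eq:uwhite}) yields $4|\kappa|^2\mathbb{E}|u(x)|^2=\int_0^1 e^{-2\kappa_{\rm i}|x-y|}\mu(y)\,dy$, and uniqueness then follows by repeating the analytic continuation and Fourier-division argument of Theorem~\ref{tm:unique} with $\sigma$ replaced by $2\kappa_{\rm i}>0$. Your explicit computation of $\mathcal{F}[e^{-2\kappa_{\rm i}|\cdot|}](\xi)=4\kappa_{\rm i}/(4\kappa_{\rm i}^2+\xi^2)$ and the remark on handling the complex-valued integrand are consistent with, and slightly more detailed than, the paper's own brief treatment.
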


\section{Numerical experiments}\label{ne}

In this section, we present the algorithmic implementation for the direct and
inverse scattering problems where the source is driven by the white noise, and
show some numerical examples to demonstrate the validity and effectiveness
of the proposed method.

\subsection{The scattering data} 

The measurement interval is chosen as $U=[-1.2,-0.2]\cup[1.2,2.2]$ which
satisfies $U\subset\mathbb R\backslash\overline{D}$ with $D=(0,1)$.
The scattering data $u(x)$ for all $x\in U$ is obtained by using the integral
equation (\ref{eq:uwhite}). Numerically, we generate the synthetic data at
discrete points $\{x_m\}_{m=0}^{M+1}\subset U$ defined by 
\[
x_0=-1.2,\quad x_{\frac M2+2}=1.2,\quad x_{m+1}=x_m+\Delta x
\]
for $m=0,\cdots,\frac M2,\frac M2+2,\cdots,M$ with $M=200$ and $\Delta x=2/M$, 
and approximate $u(x_m)$ by
\[
u(x_m)\approx\frac1{2{\rm i}\kappa}\sum_{n=0}^{N-1}e^{{\rm i}\kappa|x_m-y_n|}\sqrt{\mu(y_n)}\delta_nW,
\]
where 
\[
y_0=0,\quad y_{n+1}=y_n+\Delta y,\quad \delta_nW=W(y_{n+1})-W(y_n)
\]
for $n=0,\cdots,N-1$ with $N=200$ and $\Delta y=1/N$. The increments $\delta_nW$ with $n=0,\cdots,N-1$ defined above are independent and identically distributed, and hence can be simulated by $\sqrt{\Delta y}\xi_n$, where $\xi_n\in N(0,1)$ are independent and identically distributed random variables obeying the standard normal distribution.

\subsection{Reconstruction formula}

According to Corollary \ref{coro:unique}, the micro-correlation strength $\mu$
can be uniquely recovered by the energy $\mathbb E|u(x)|^2$ for $x\in U$ at a
fixed wave number $k$. However, the kernel $e^{-2\kappa_{\rm i}|x-y|}$ in the
integral in (\ref{eq:formula}) decays exponentially, which makes it difficult to
recover the high frequency modes of the strength $\mu$ numerically. To overcome
this difficulty, we use the following modified data instead in the numerical
experiments. Moreover, the multi-frequency data is used to enhance the
stability of the numerical solution.

Rewrite (\ref{eq:uwhite}) as
\[
2\kappa{\rm i}u(x)=\int_0^1e^{{\rm i}\kappa|x-y|}\sqrt{\mu(y)}dW(y),
\]
which can be split into the real and imaginary parts 
\begin{eqnarray*}
\Re[2\kappa{\rm i}u(x)]&=&\int_0^1e^{-\kappa_{\rm i}|x-y|}\cos(\kappa_{\rm r}|x-y|)\sqrt{\mu(y)}dW(y),\\
\Im[2\kappa{\rm i}u(x)]&=&\int_0^1e^{-\kappa_{\rm i}|x-y|}\sin(\kappa_{\rm r}|x-y|)\sqrt{\mu(y)}dW(y).
\end{eqnarray*}
Define the modified data
\begin{equation}\label{eq:data}
\mathcal{M}(x,k):=\mathbb E(\Re[2\kappa{\rm i}u(x)])^2-\mathbb E(\Im[2\kappa{\rm i}u(x)])^2.
\end{equation}
It can be verified that 
\begin{eqnarray*}
\mathcal{M}(x,k)&=&\int_0^1e^{-2\kappa_{\rm i}|x-y|}\cos^2(\kappa_{\rm r}|x-y|)\mu(y)dy\\
& & -\int_0^1e^{-2\kappa_{\rm i}|x-y|}\sin^2(\kappa_{\rm r}|x-y|)\mu(y)dy\\
&=&\int_0^1e^{-2\kappa_{\rm i}|x-y|}\cos(2\kappa_{\rm r}|x-y|)\mu(y)dy,
\end{eqnarray*}
whose evaluation at discrete points $\{x_m\}_{m=0}^{M+1}$ and wave number $k$ can be approximated by
\begin{equation}\label{eq:approx}
\mathcal{M}(x_m,k)\approx\Delta y\sum_{n=0}^{N-1}e^{-2\kappa_{\rm i}|x_m-y_n|}\cos(\kappa_{\rm r}|x_m-y_n|)\mu(y_n).
\end{equation}

The value of the strength $\mu$ at discrete points $\{y_n\}_{n=0}^{N-1}$ can be
numerically recovered by (\ref{eq:approx}) based on the truncated singular value
decomposition (SVD) with tolerance $\tau=10^{-3}$. Throughout the numerical
experiments, we use the average of $P=10^5$ sample paths as an approximation of
the expectation when calculating the data $\mathcal{M}$ in (\ref{eq:data}).

\subsection{Numerical examples}

We present three numerical examples to illustrate the performance of the
method. The first example contains only one Fourier mode and the second example
contains two Fourier modes. The third example contains more high Fourier modes
and the strength is more difficult to be recovered. 

\begin{figure}
\includegraphics[width=0.4\textwidth]{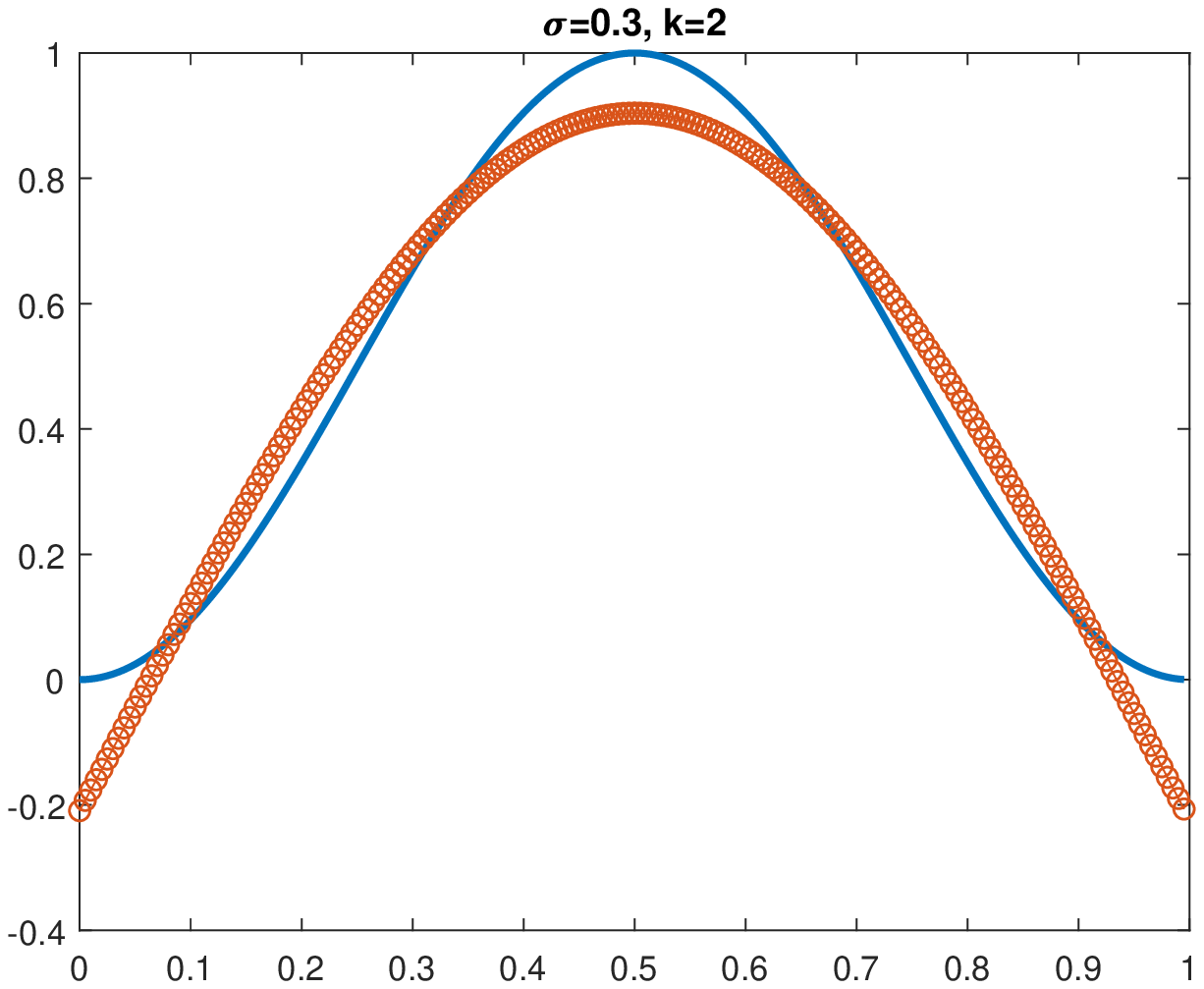}
\includegraphics[width=0.4\textwidth]{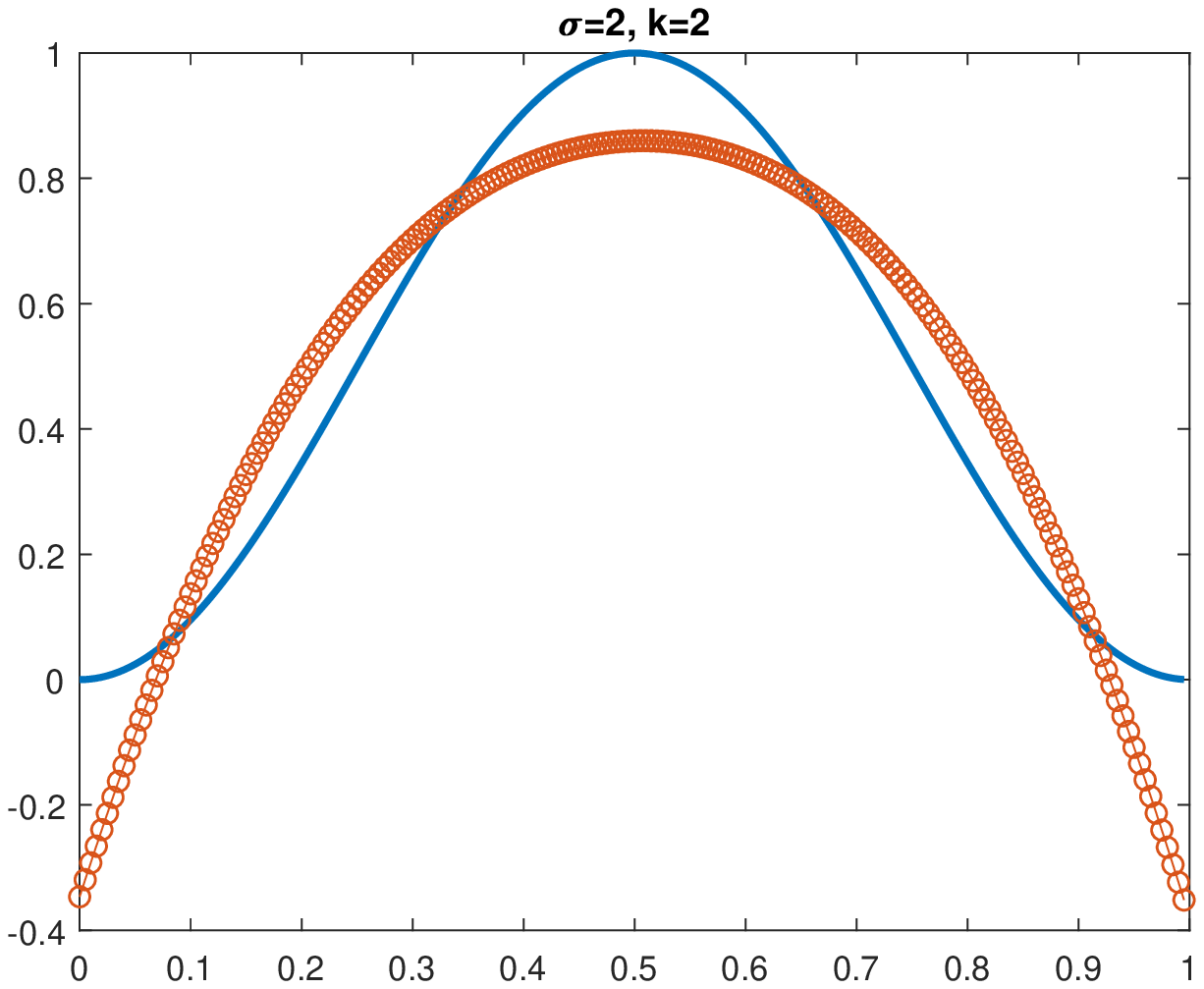}\\
\includegraphics[width=0.4\textwidth]{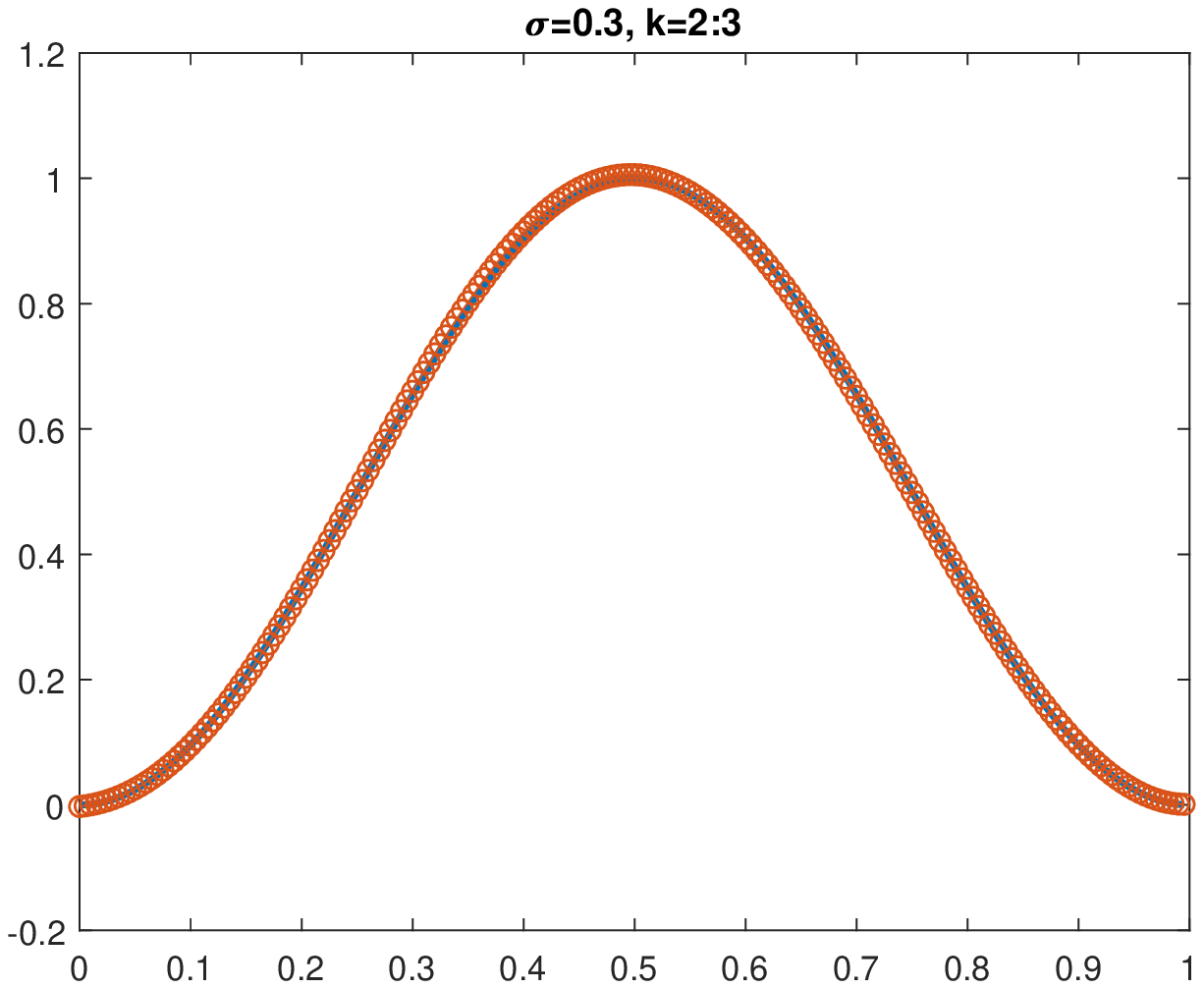}
\includegraphics[width=0.4\textwidth]{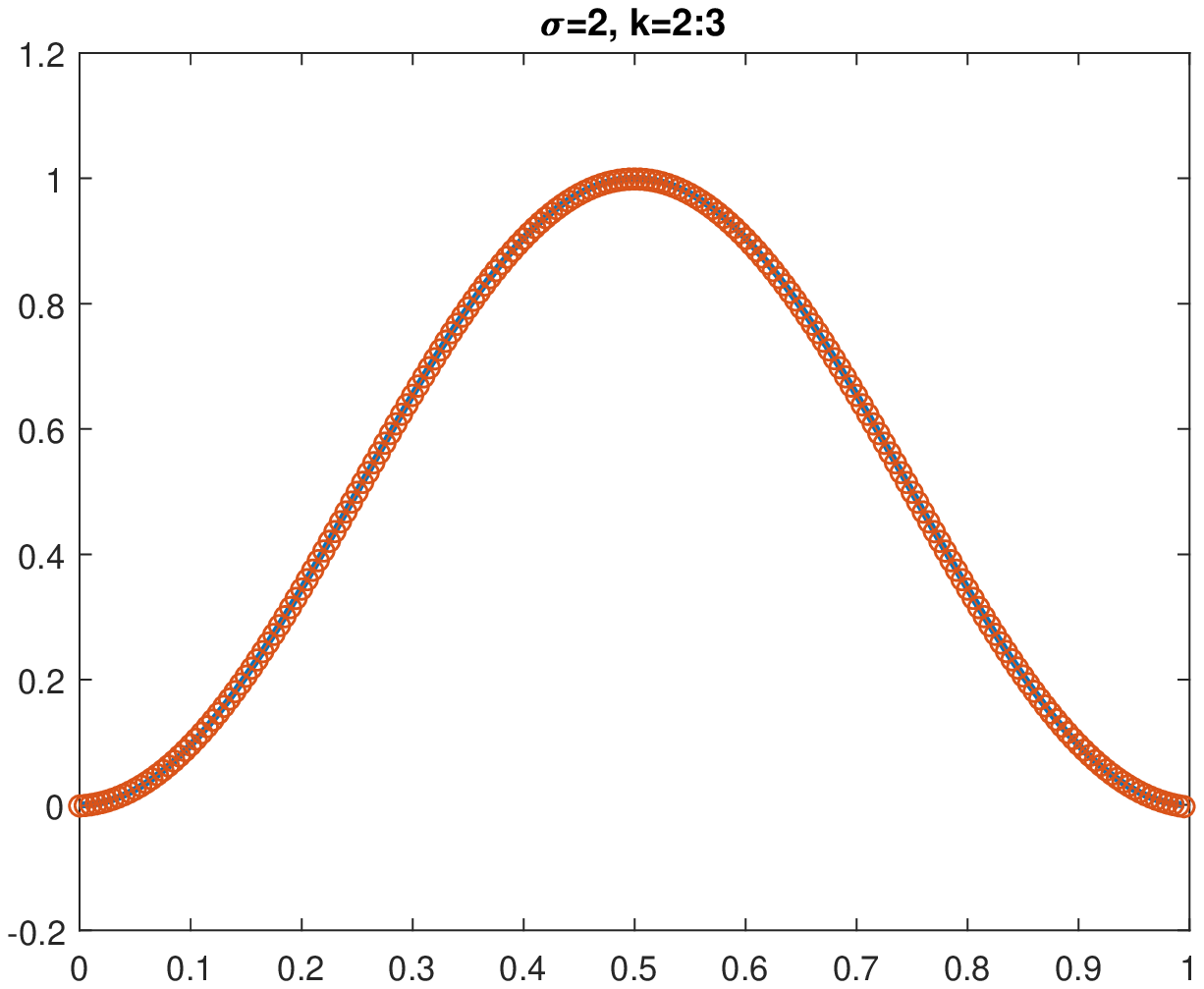}
\caption{Reconstruction of the strength in Example 1. Solid blue line:
exact strength; circled red line: reconstructed strength. For both $\sigma=0.3$
(left column) and $\sigma=2$ (right column), reconstructions based on data at
two frequencies $k=2,3$ are better than the ones based on data at a single
frequency $k=2$.}
\label{fig:1}
\end{figure}

\begin{figure}[h]
\includegraphics[width=0.3\textwidth]{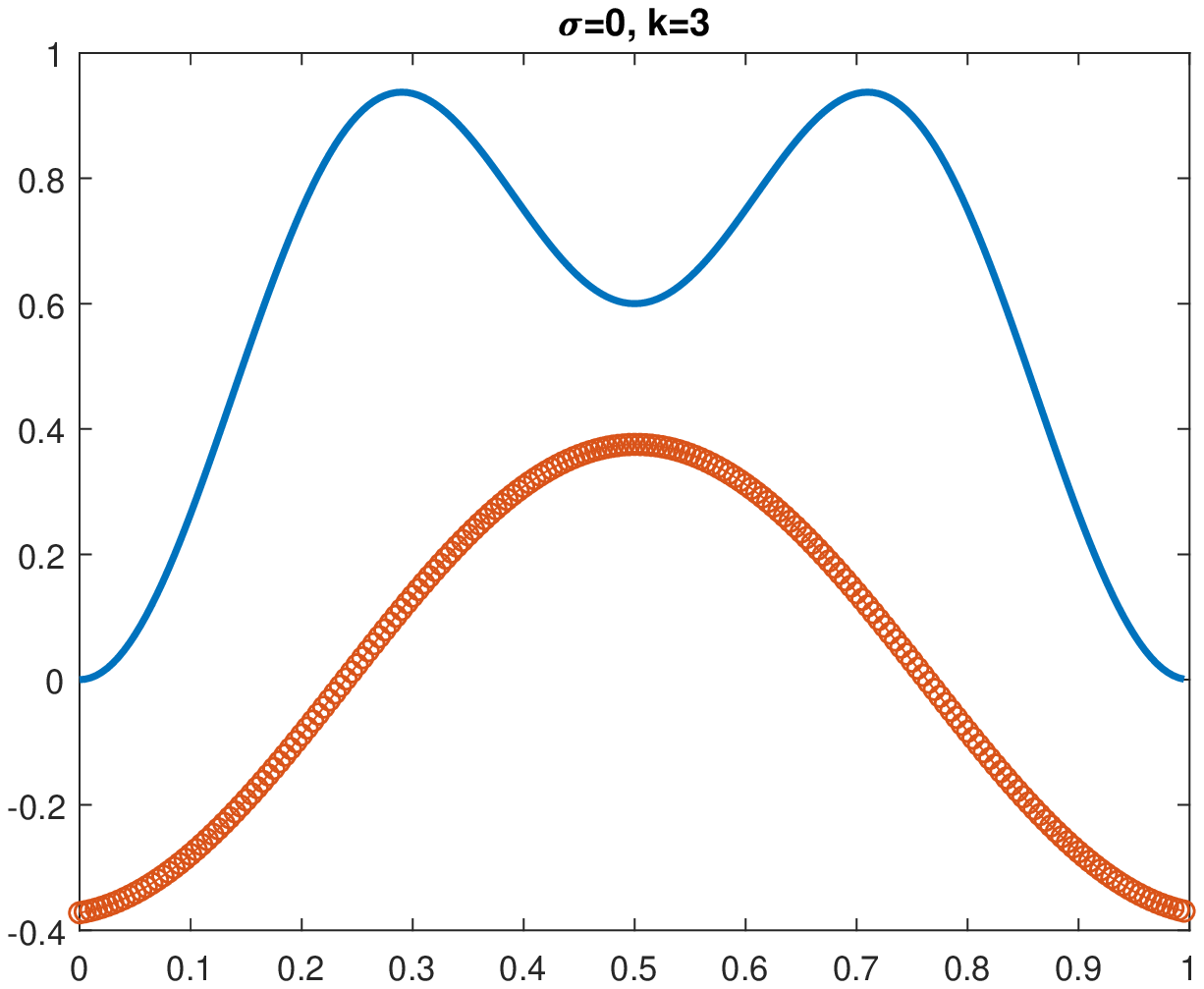}
\includegraphics[width=0.3\textwidth]{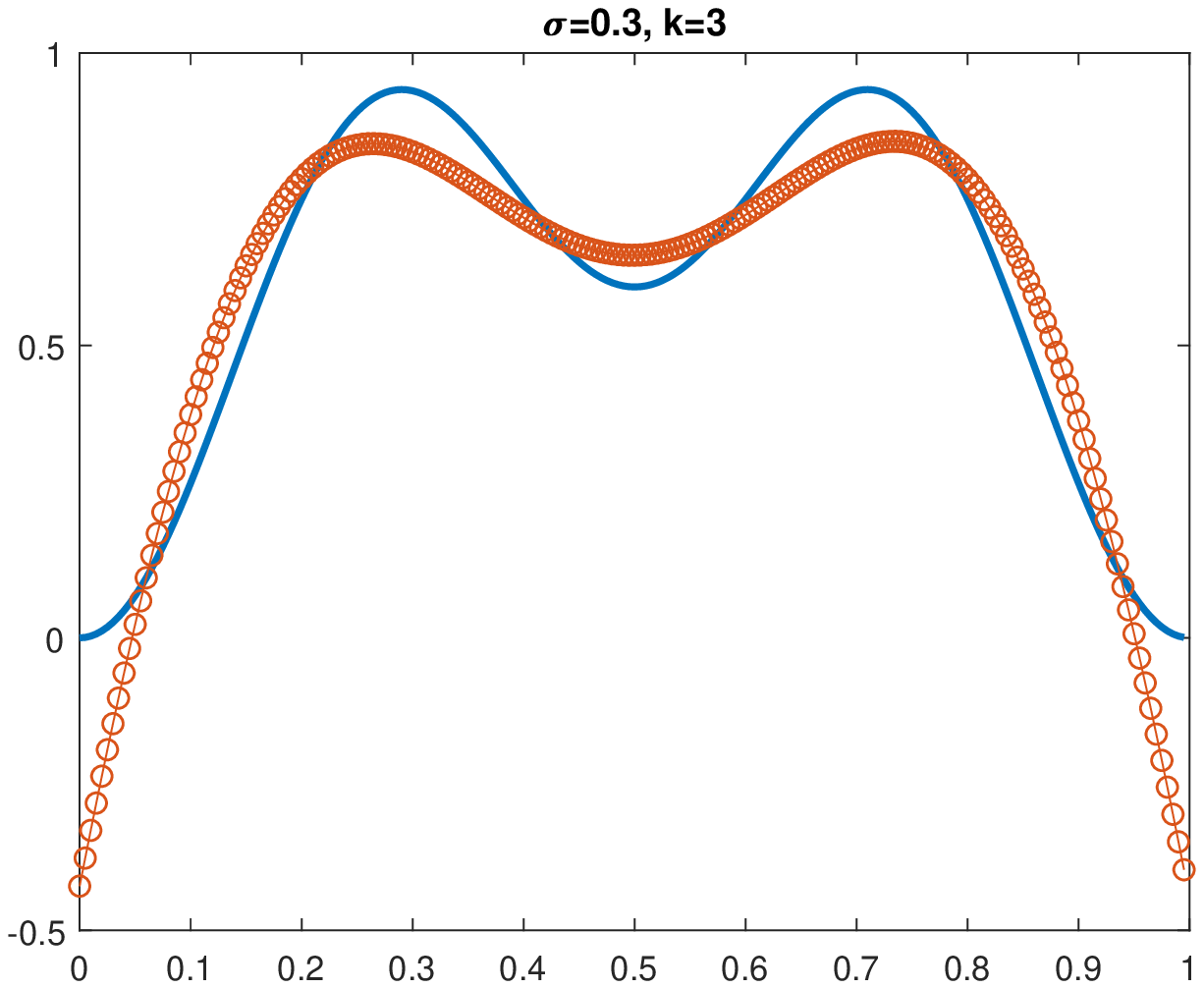}
\includegraphics[width=0.3\textwidth]{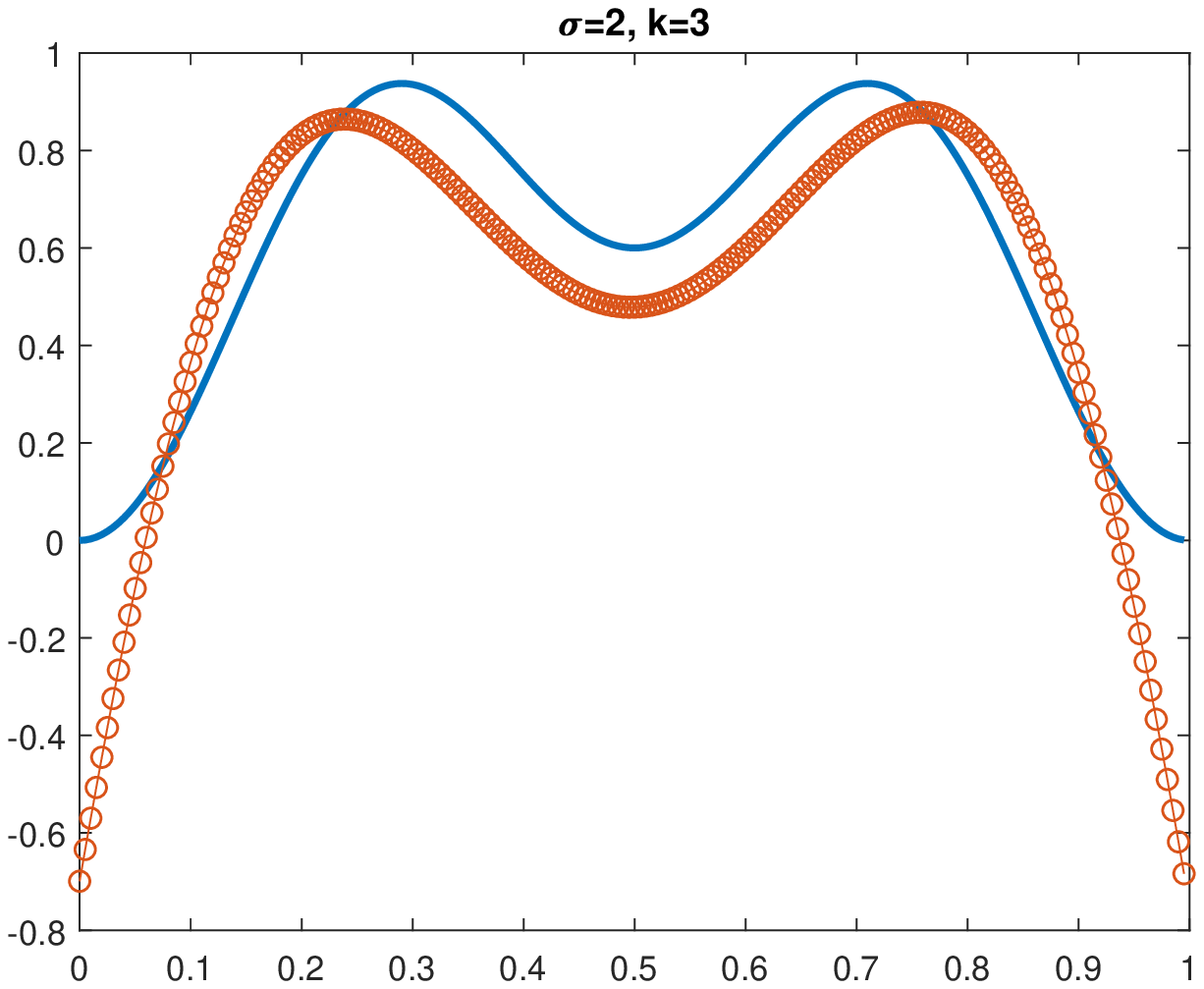}\\
\includegraphics[width=0.3\textwidth]{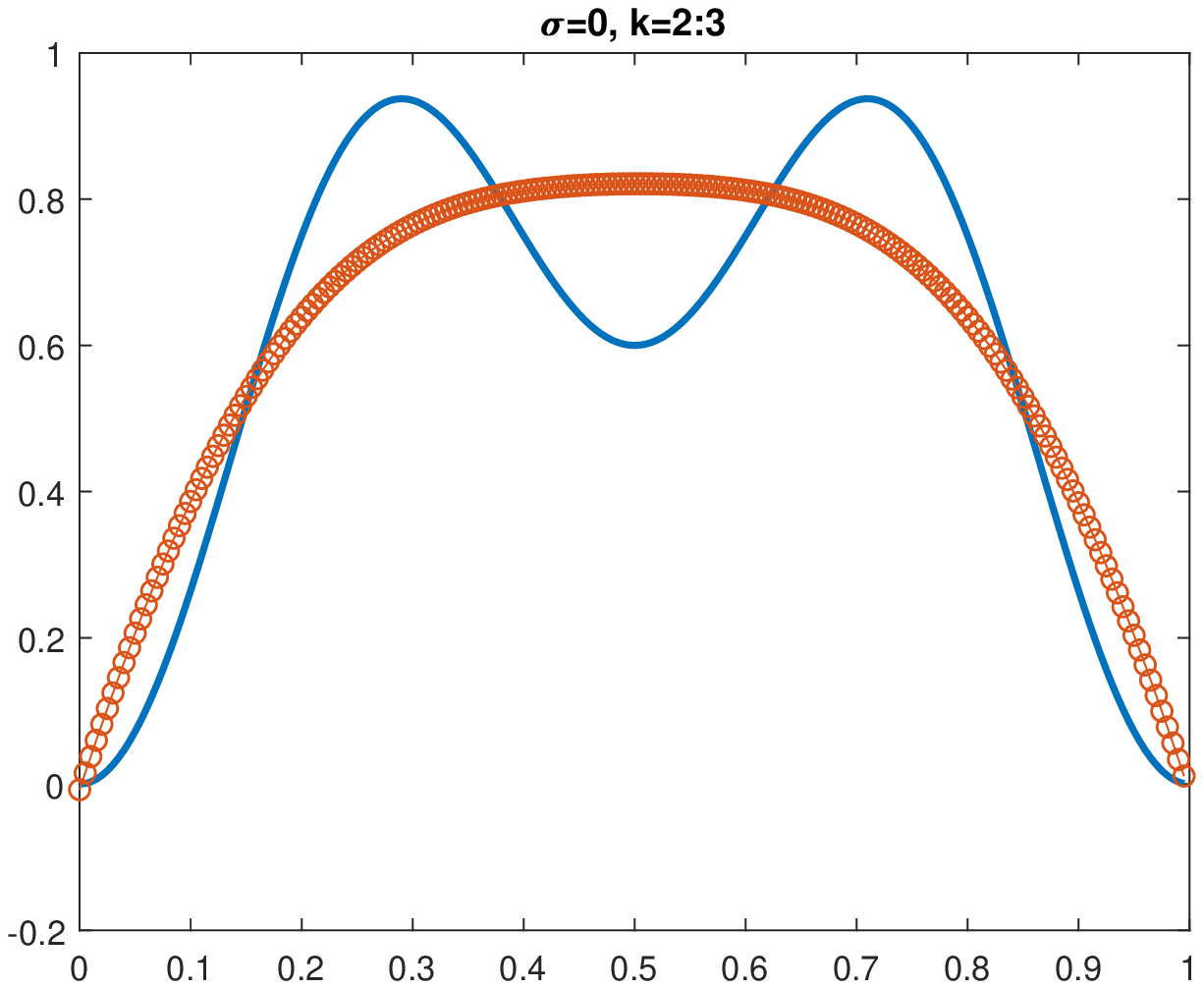}
\includegraphics[width=0.3\textwidth]{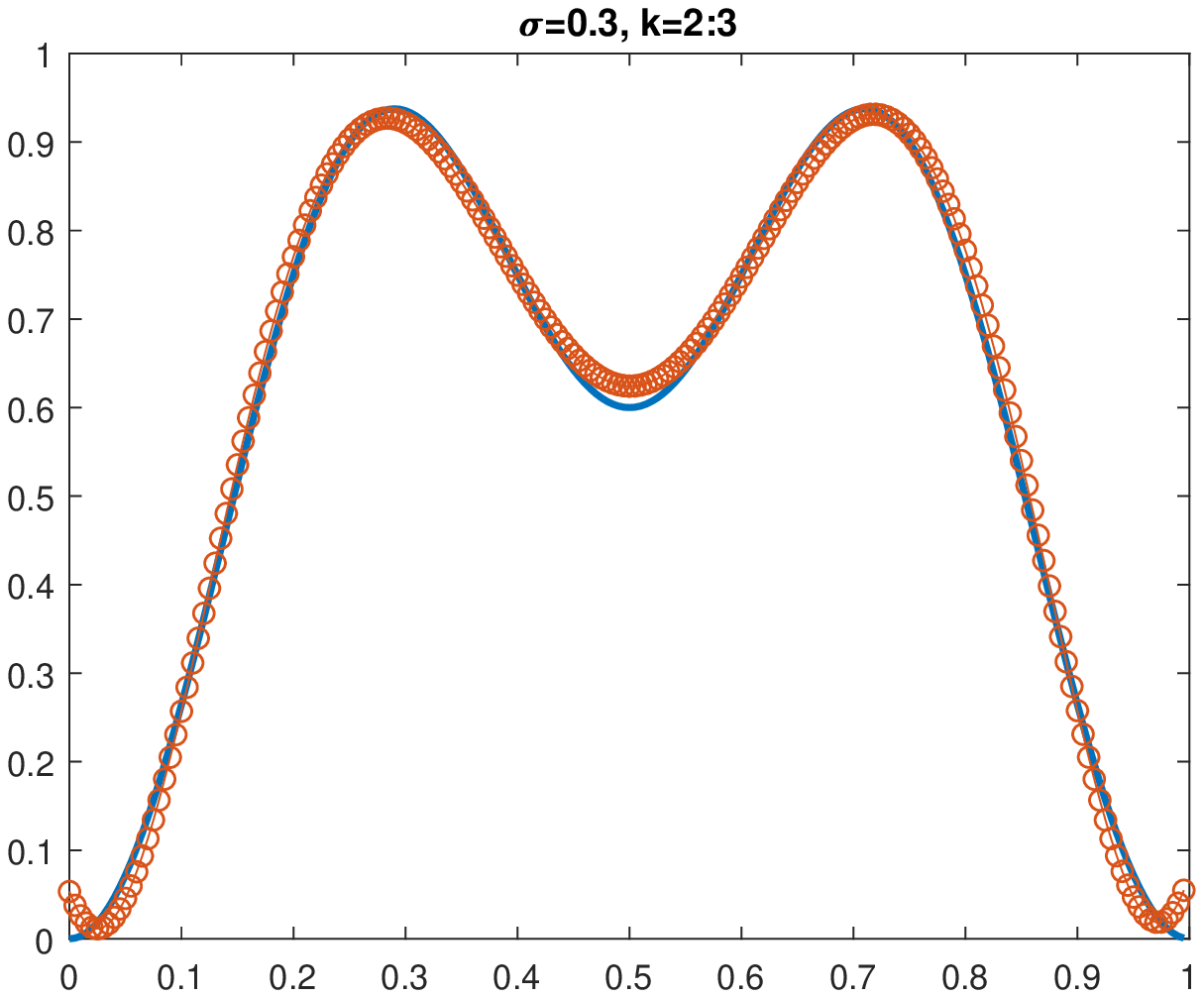}
\includegraphics[width=0.3\textwidth]{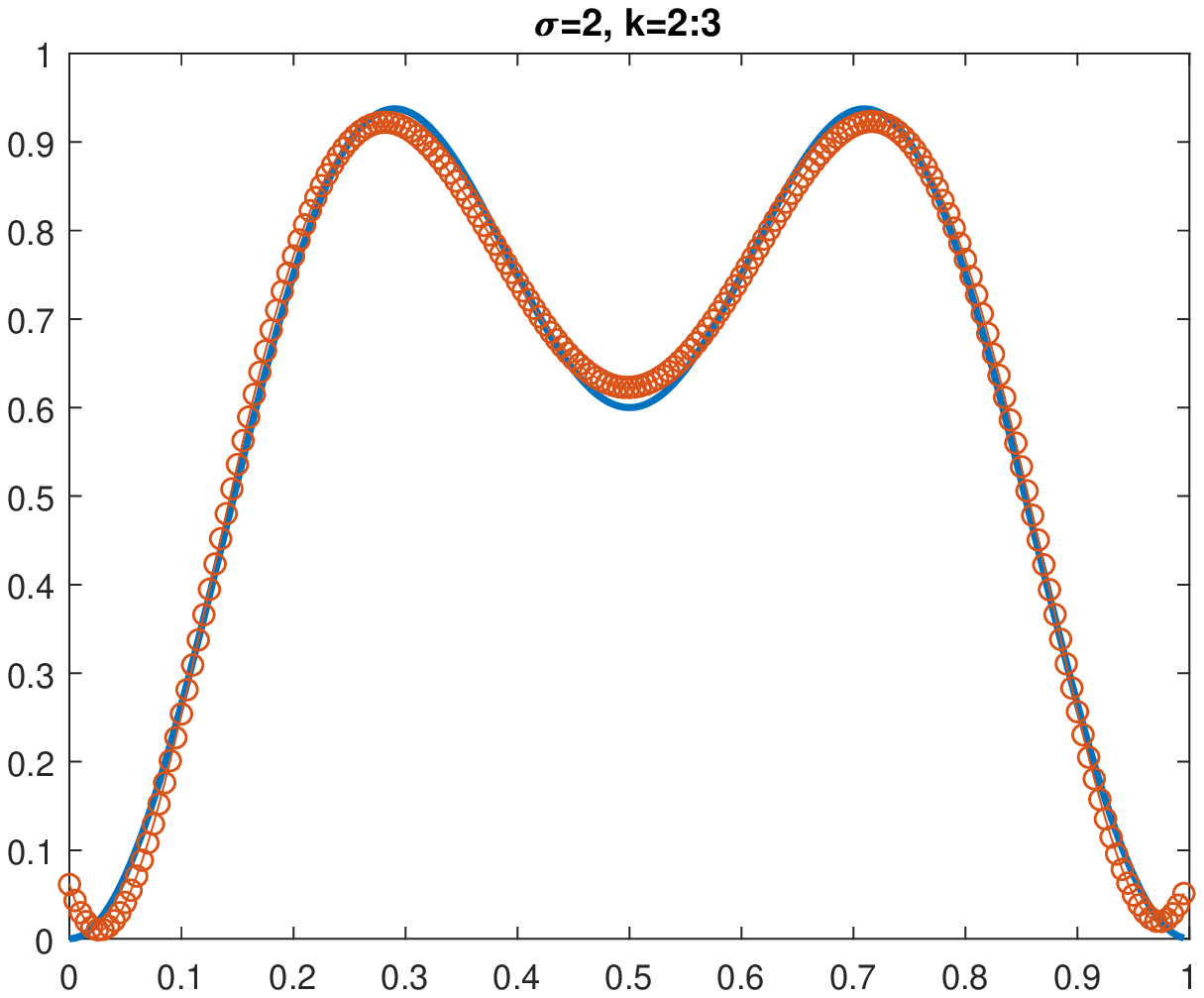}\\
\includegraphics[width=0.3\textwidth]{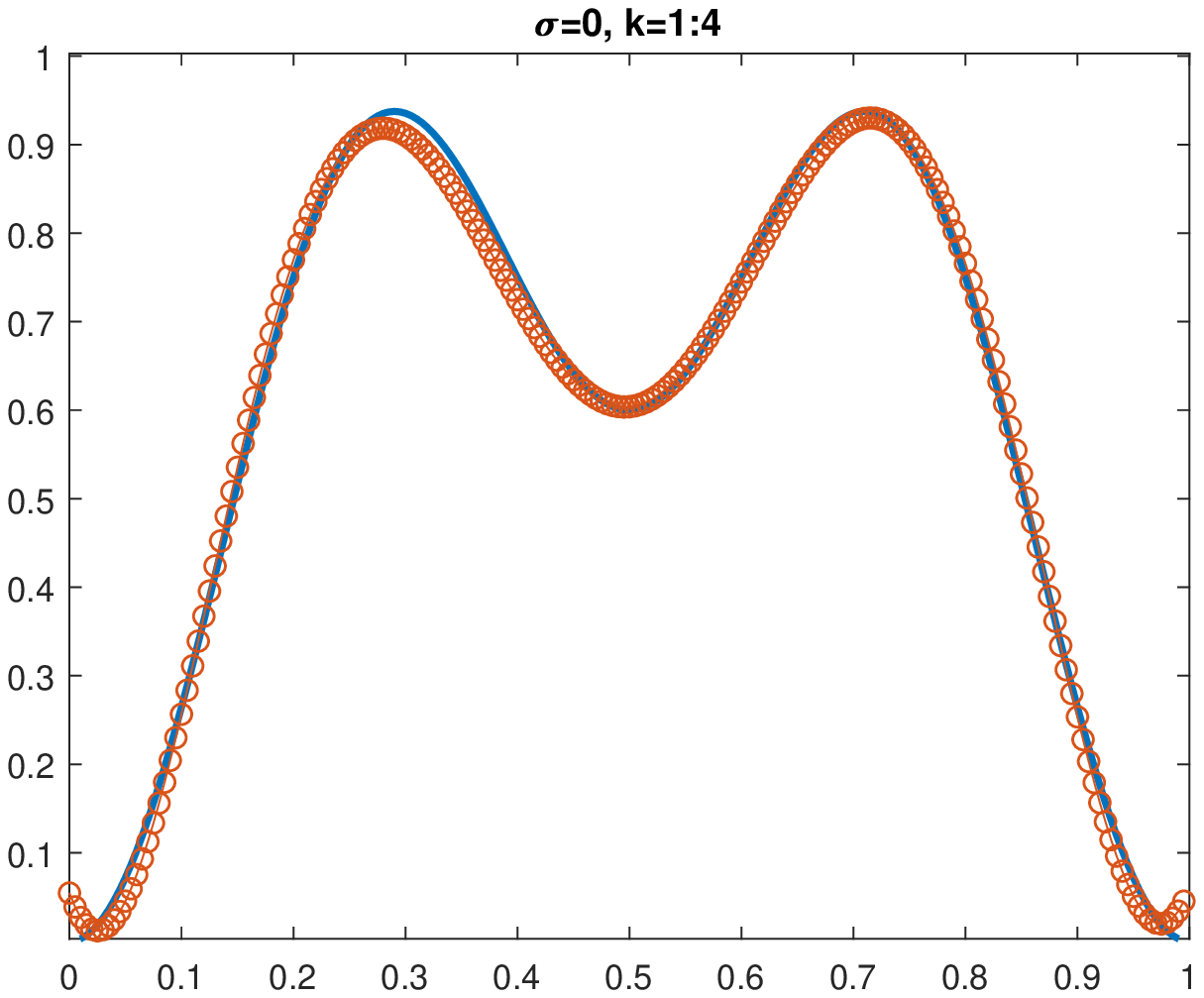}
\includegraphics[width=0.3\textwidth]{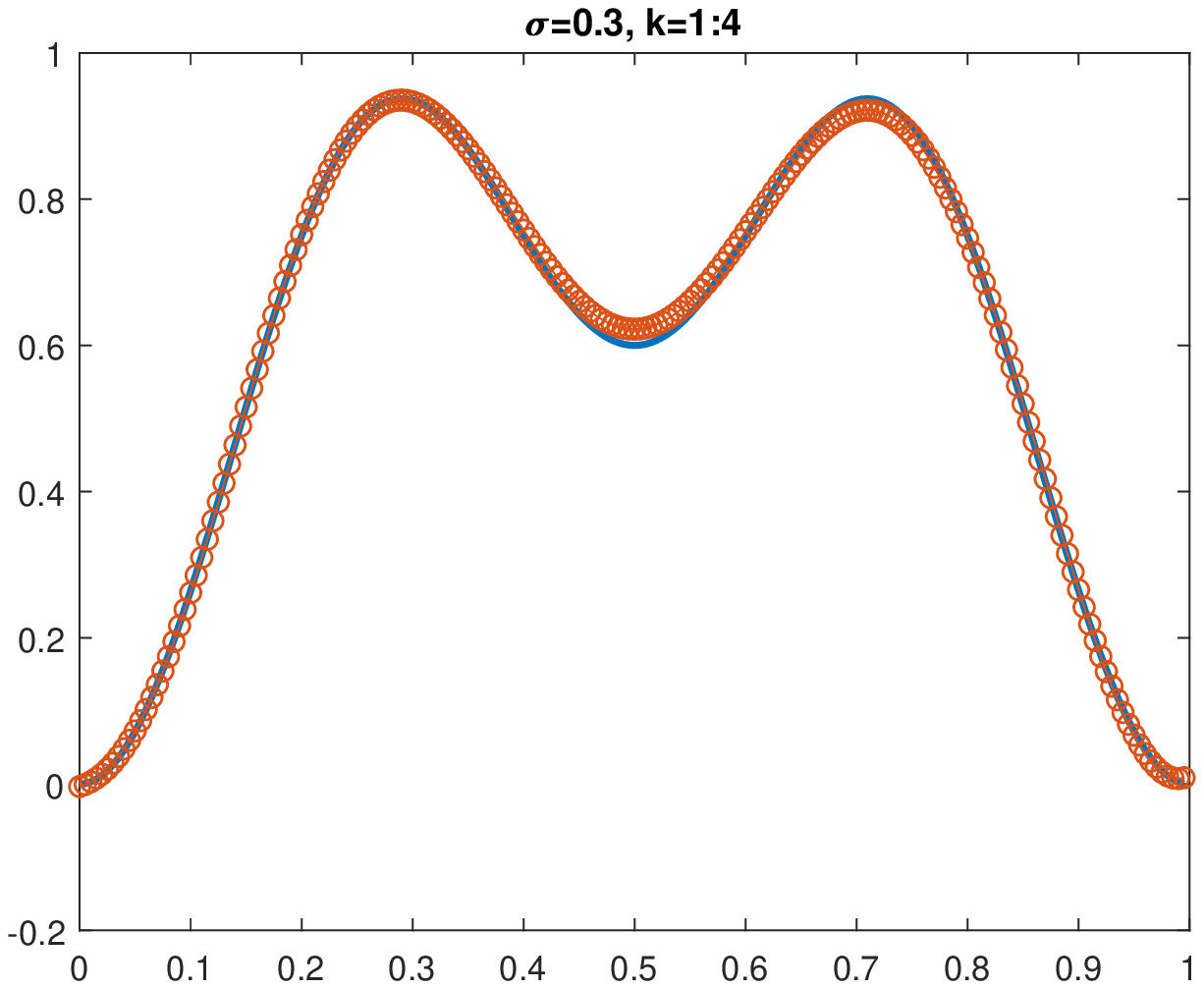}
\includegraphics[width=0.3\textwidth]{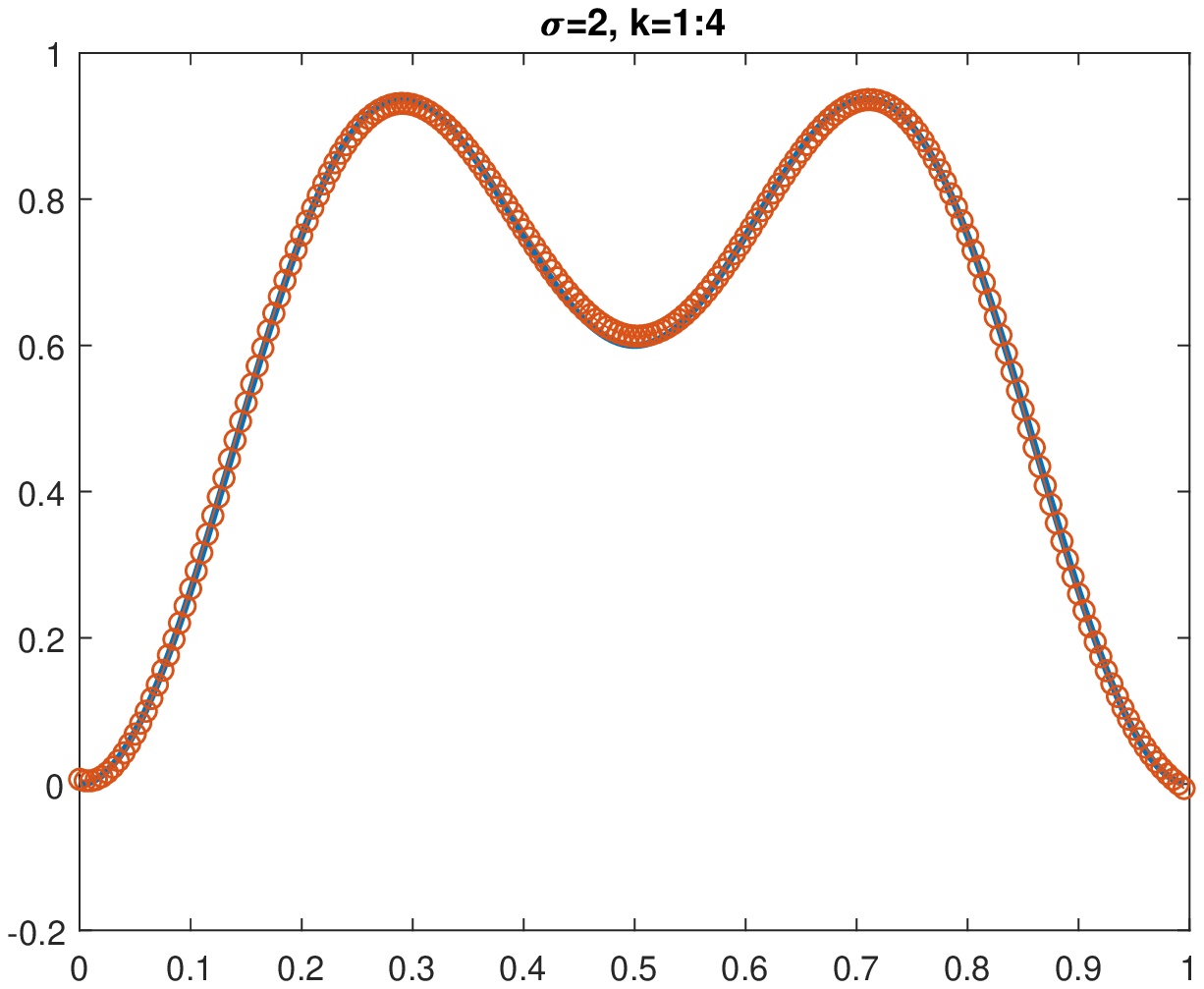}
\caption{Reconstruction of the strength in Example 2. Solid blue line:
exact strength; circled red line: reconstructed strength. For $\sigma=0$  (left
column), $\sigma=0.3$ (middle column) and $\sigma=2$ (right column),
reconstructions get better when data at more frequencies are used (top: $k=3$;
middle: $k=2,3$; bottom: $k=1:4$).}
\label{fig:2}
\end{figure}

\begin{example}
\normalfont Reconstruct the strength given by
\[
\mu(x)=0.5(1-\cos(2\pi x))
\]
inside the interval $(0,1)$. 
Figure \ref{fig:1} plots the reconstructed strength and the exact one based on
the modified data $\mathcal{M}(x_m,k)$ with different attenuation coefficients
$\sigma=0.3,2$ at one frequency $k=2$ and two frequencies $k=2,3$. As expected,
the better reconstruction can be obtained when data at more frequencies is
used. The strength $\mu$ can be properly recovered by data at two frequencies
$k=2,3$ since $\mu$ considered in this example contains one low frequency
Fourier mode. 
\end{example}

\begin{example}
\normalfont Reconstruct the strength given by
\[
\mu(x)=0.6-0.3\cos(2\pi x)-0.3\cos(4\pi x)
\]
inside the interval $(0,1)$. This example contains two Fourier modes and
is a little harder than Example 1. Figure \ref{fig:2} shows the reconstructed
strength and the exact one based on the modified data $\mathcal{M}(x_m,k)$ with
different attenuation coefficients $\sigma=0,0.3,2$ at one frequency $k=3$, two
frequencies $k=2,3$ and four frequencies $k=1,2,3,4$. Note that if $\sigma=0$,
then $\kappa_{\rm i}=0$ and the exponential kernel in (\ref{eq:formula})
vanishes. Hence, only the average of the strength $\mu$ can be recovered, and
the strength itself can not be uniquely determined based on the data at a single
frequency in this case. To reconstruct the strength $\mu$ for the case
$\sigma=0$, the multi-frequency data is required. We refer to \cite{BCLZ14,L11}
for the details of inverse random source problem of the one-dimensional
Helmholtz equation without attenuation. For $\sigma=0.3,2$, the strength $\mu$
can be properly recovered by using the data at a few frequencies. 
\end{example}

\begin{example}
\normalfont Reconstruct the strength given by
\[
\mu(x)=0.5e-0.3e^{\cos(4\pi x)}-0.2e^{\cos(6\pi x)}
\]
inside the interval $(0,1)$. The strength $\mu$ in this example contains more
higher Fourier modes than the two previous examples. Hence, it is expected
that the data at more frequencies is required to reconstruct the strength.
Figure \ref{fig:3} shows the reconstructed strength and the exact one based on
data with $\sigma=0,0.3,2$ at one frequency $k=3$ or more frequencies
$k=1,\cdots,8$ and $k=1,\cdots,16$, respectively. For $\sigma=0$, data at a
single frequency can hardly recover the strength. For $\sigma=0.3,2$, data at a
single frequency could roughly recover the strength, and the reconstructions get
better when data at more frequencies is used.
\end{example}

\begin{figure}[h]
\includegraphics[width=0.3\textwidth]{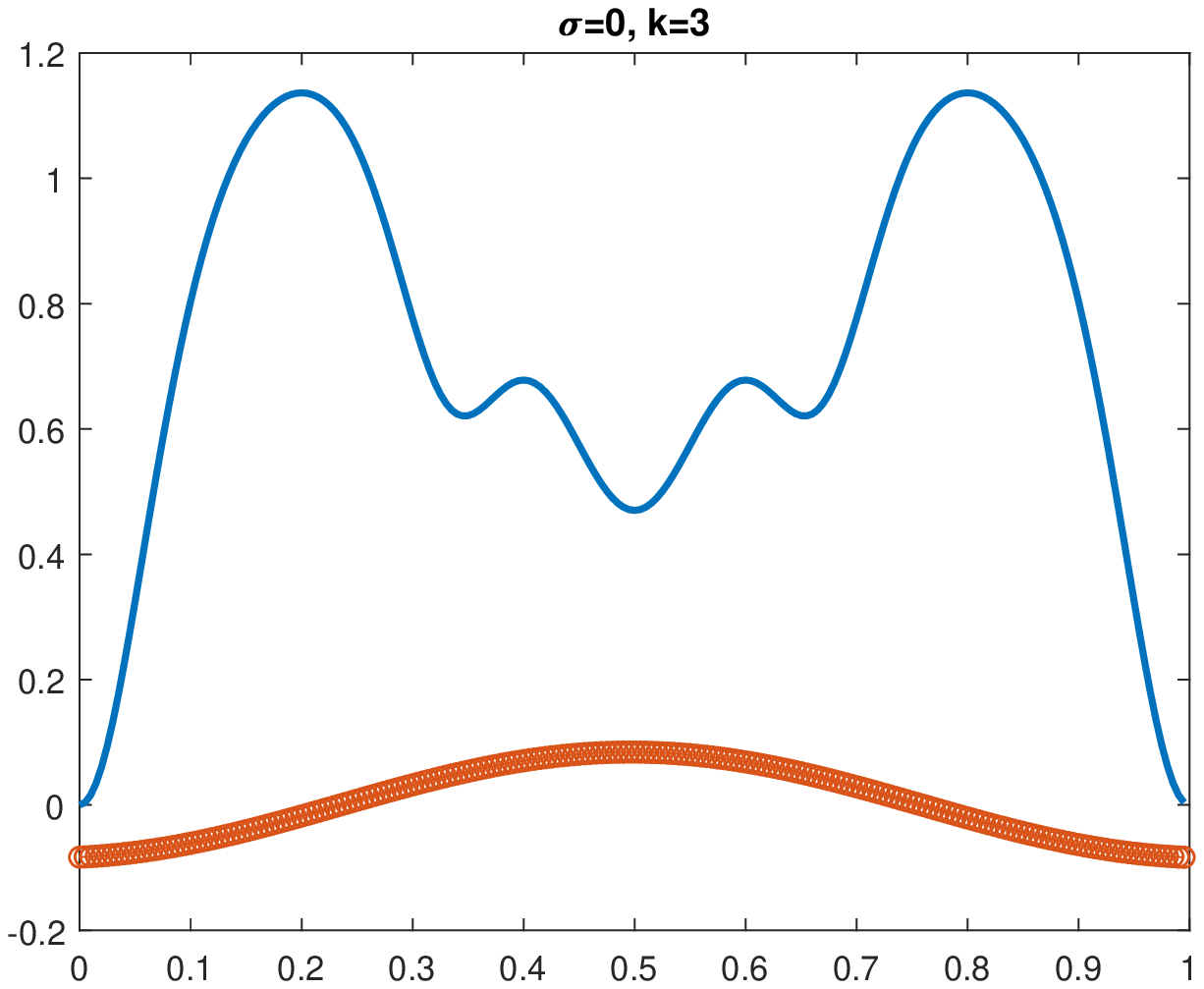}
\includegraphics[width=0.3\textwidth]{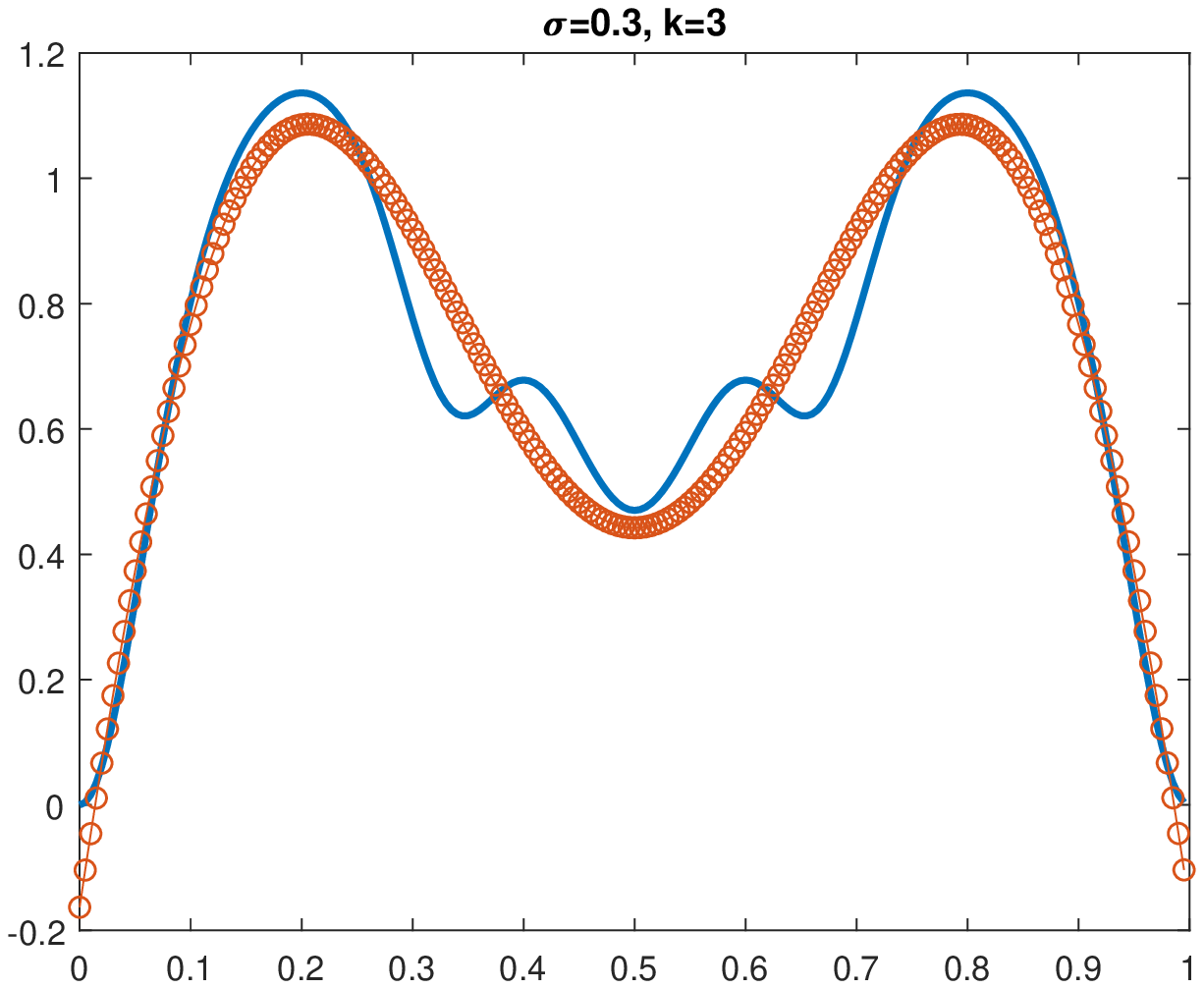}
\includegraphics[width=0.3\textwidth]{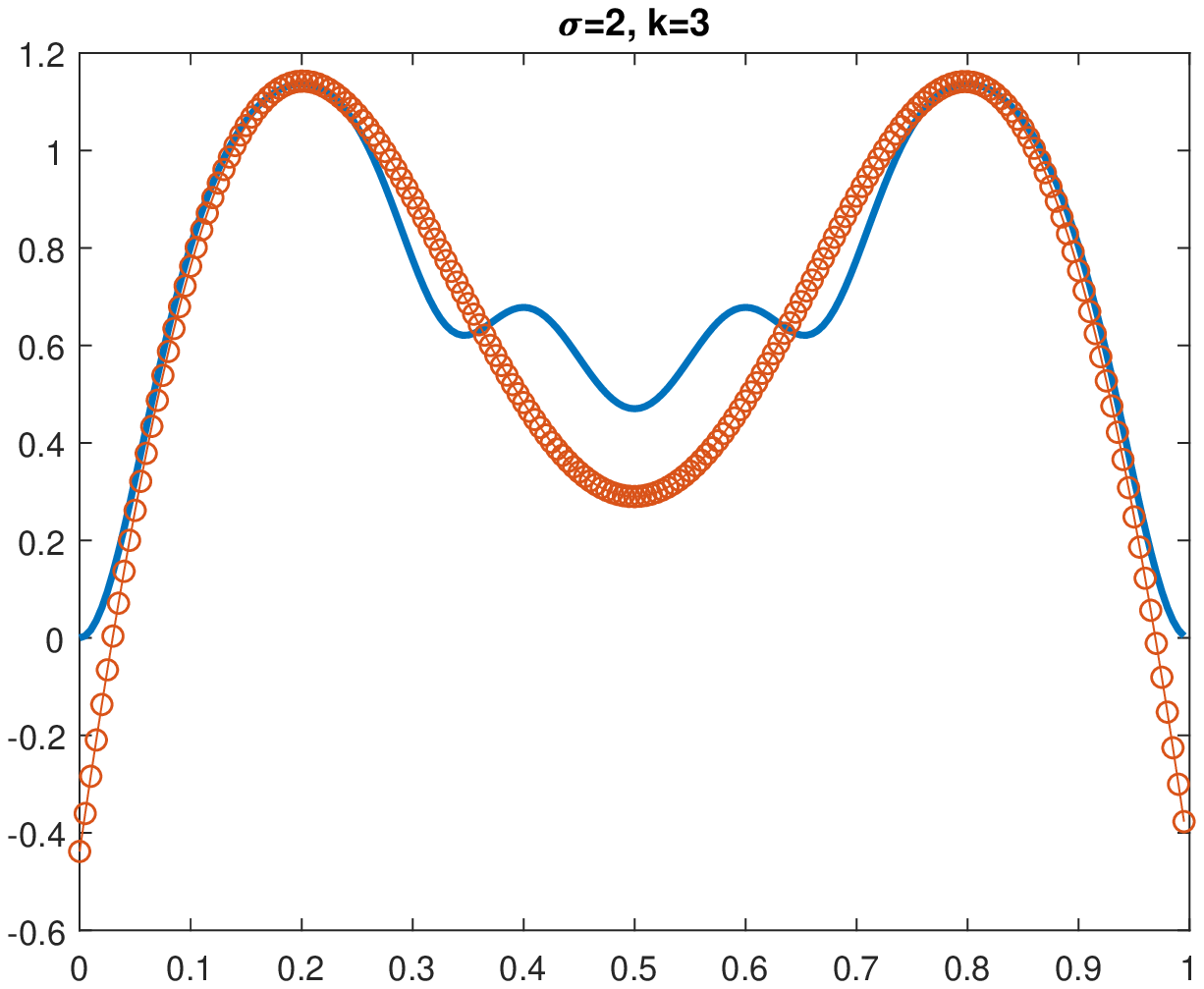}\\
\includegraphics[width=0.3\textwidth]{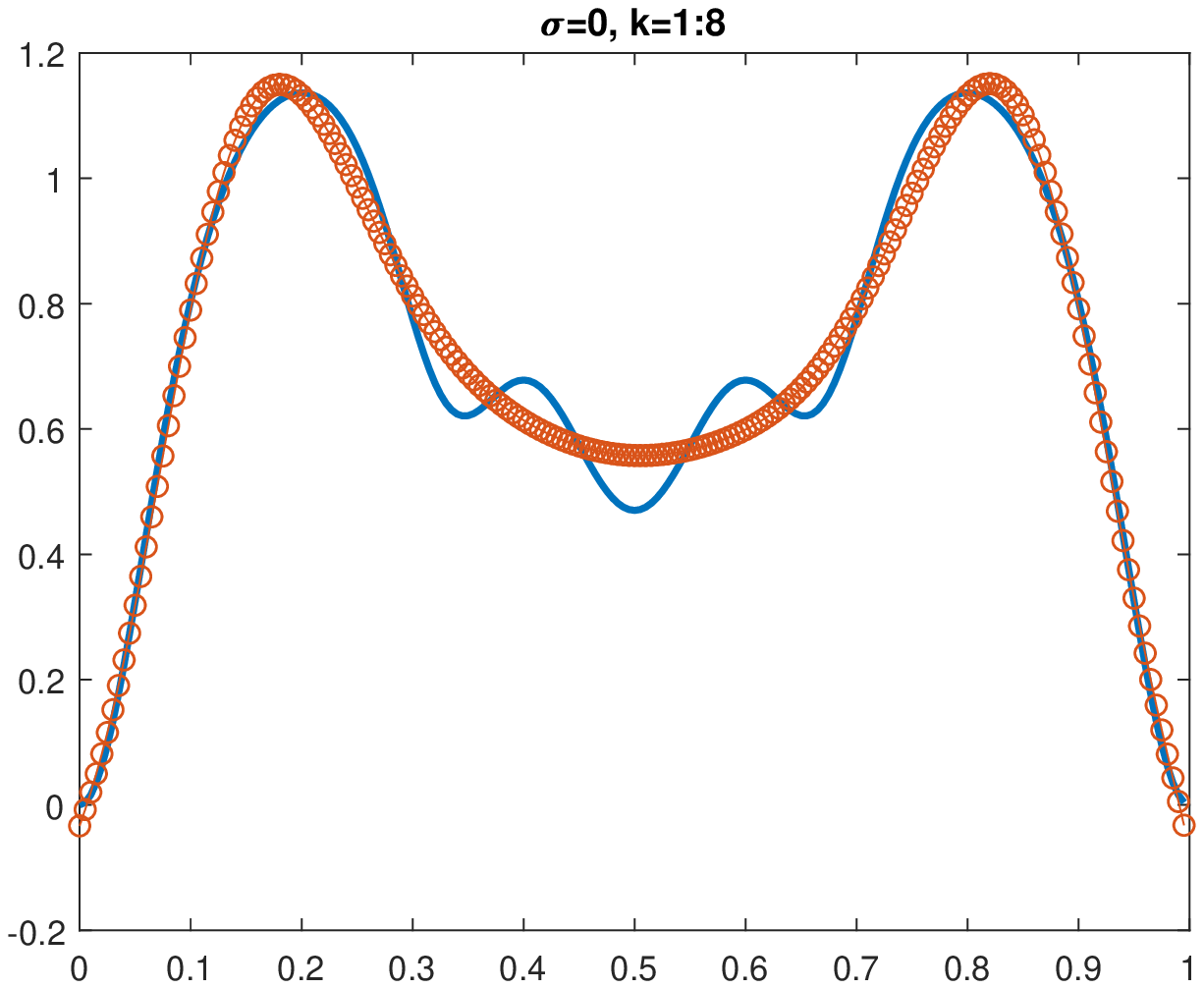}
\includegraphics[width=0.3\textwidth]{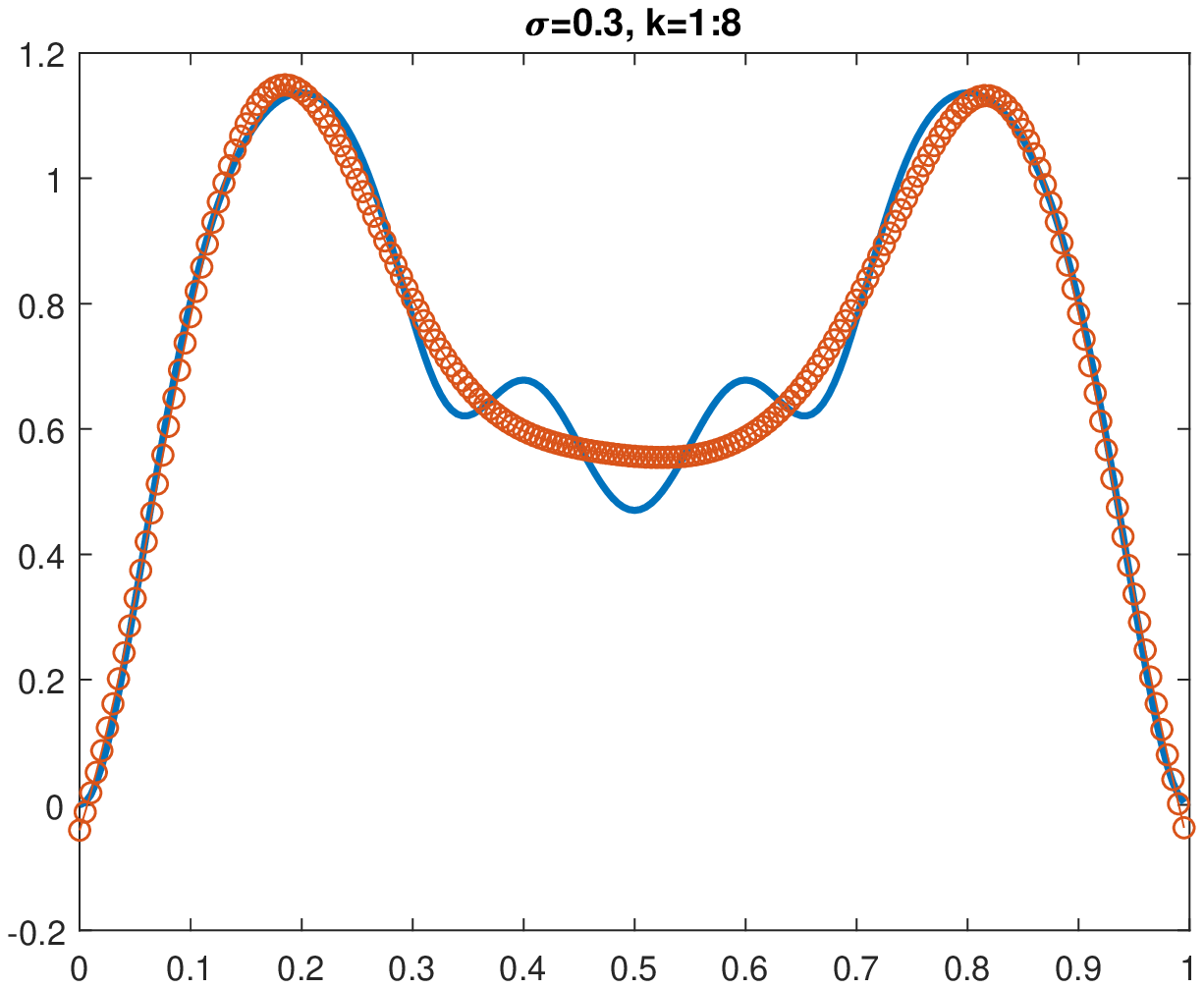}
\includegraphics[width=0.3\textwidth]{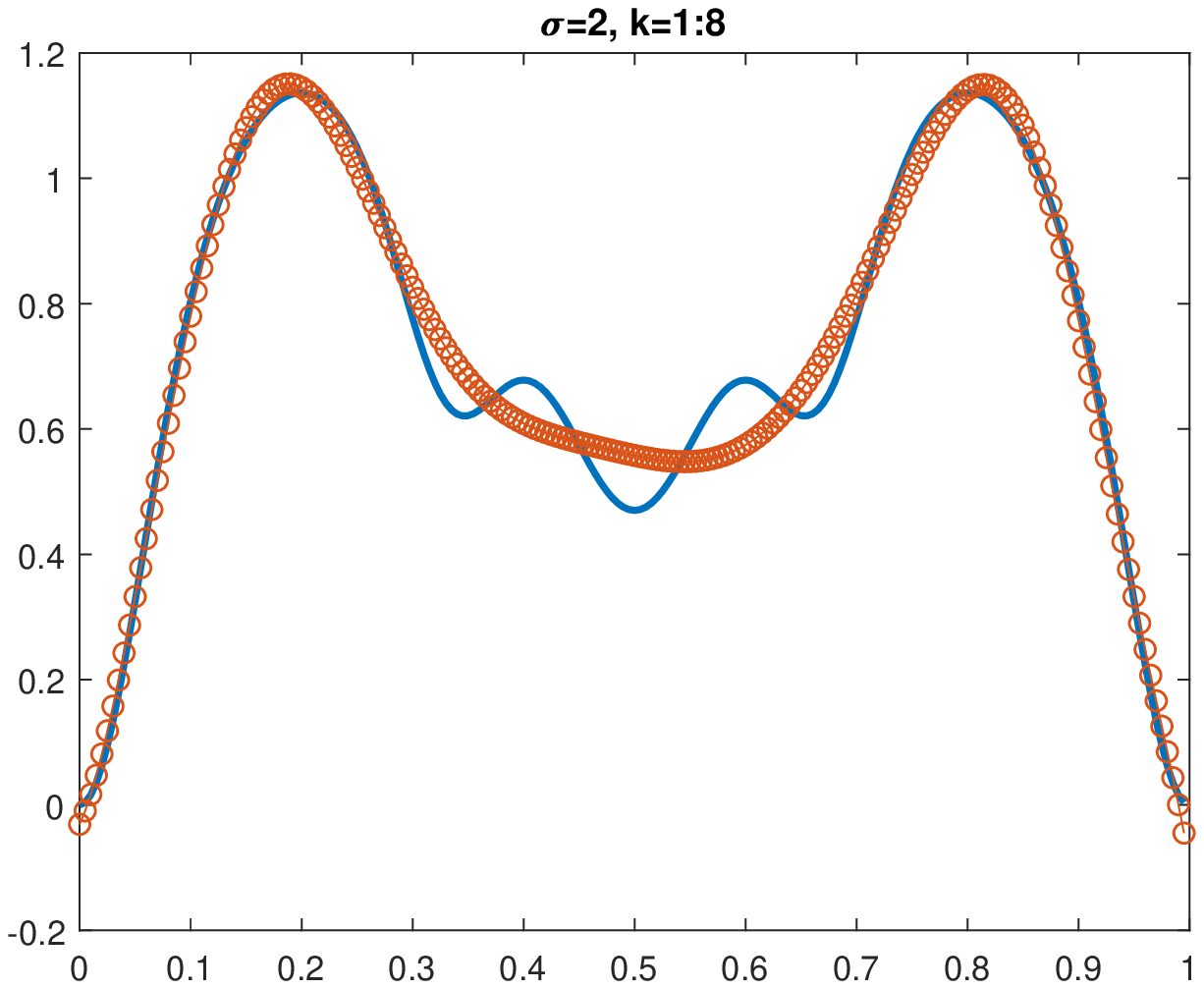}\\ 
\includegraphics[width=0.3\textwidth]{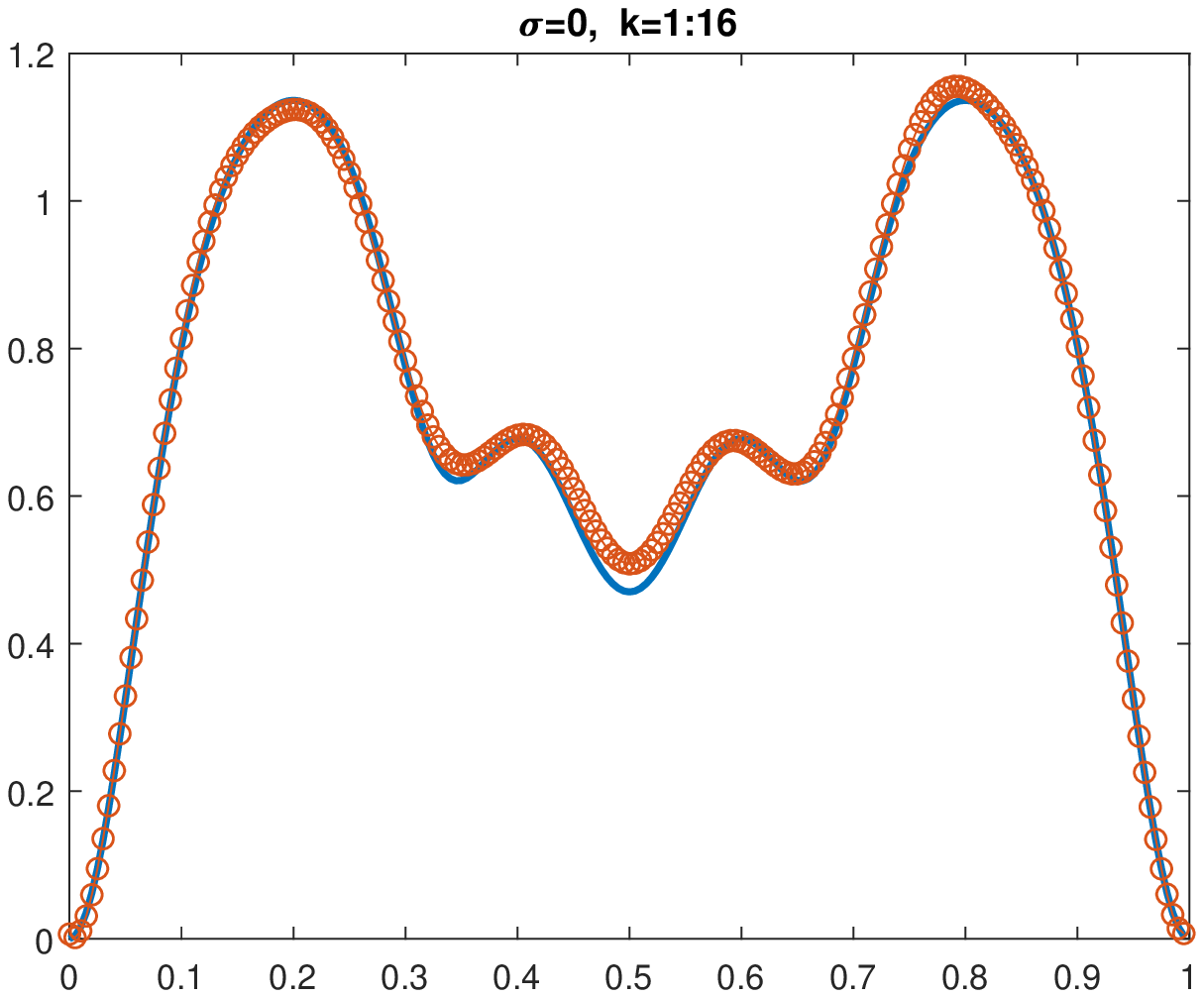}
\includegraphics[width=0.3\textwidth]{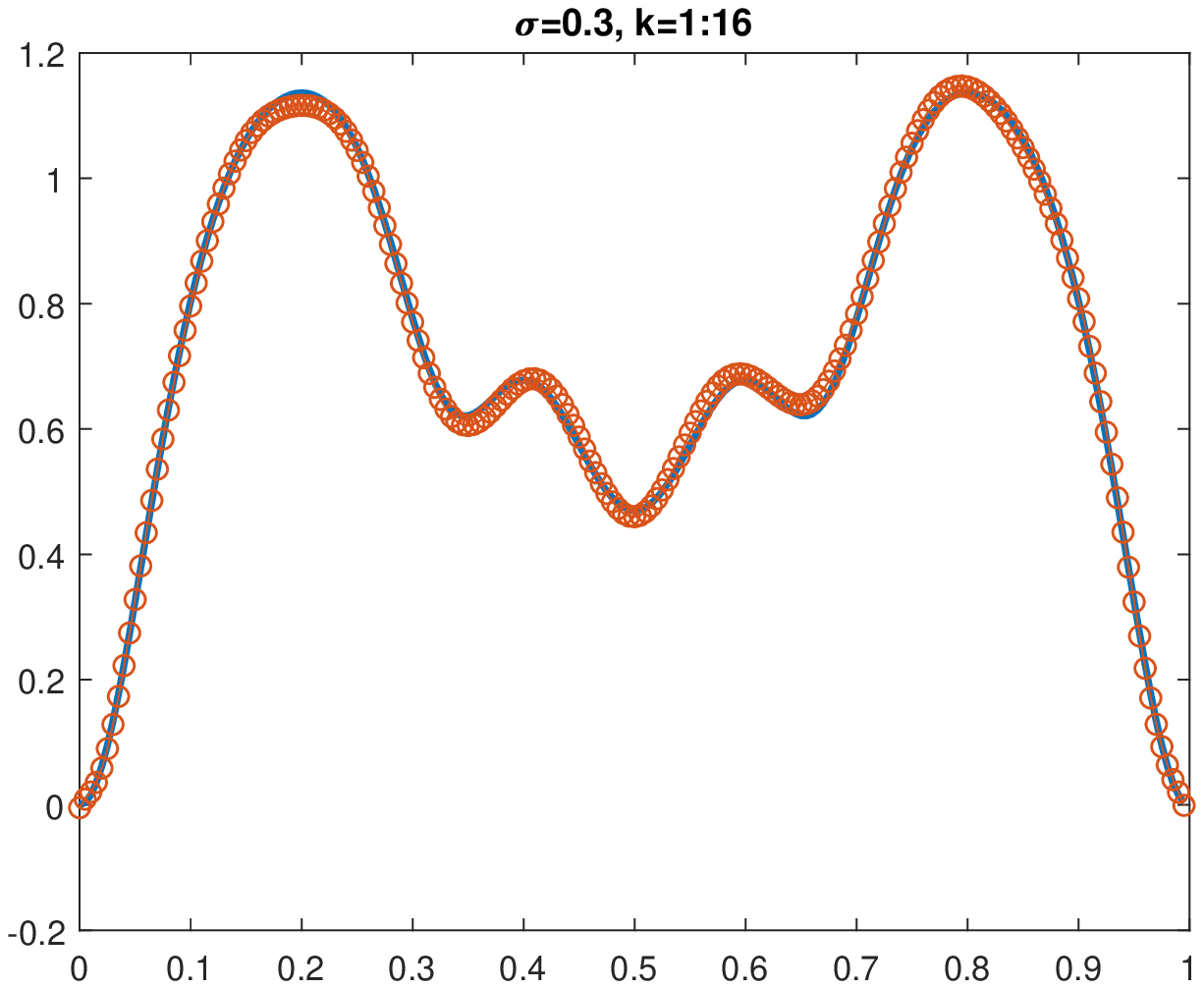}
\includegraphics[width=0.3\textwidth]{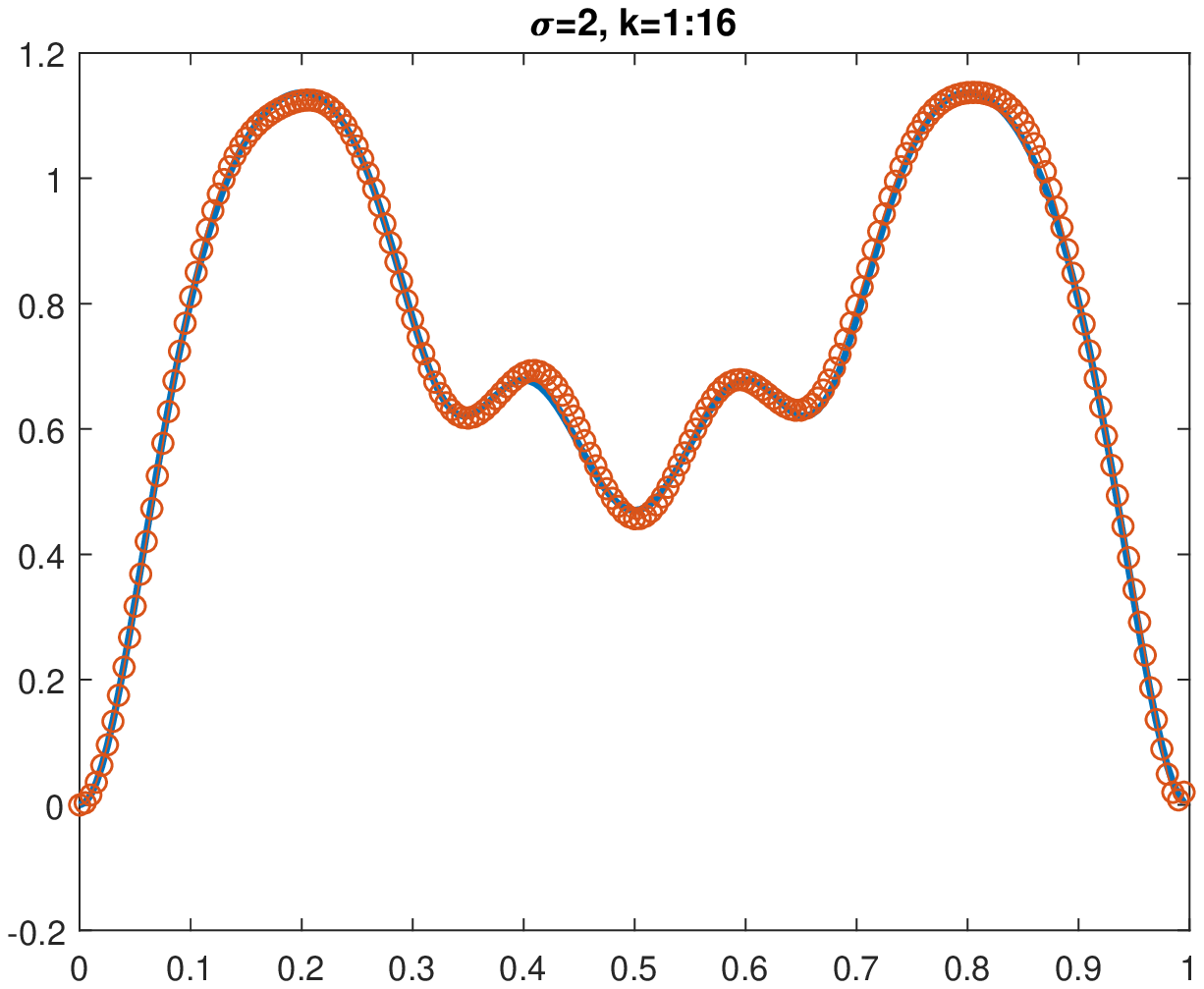}
\caption{Reconstruction of the strength in Example 3. Solid blue line:
exact strength; circled red line: reconstructed strength. For $\sigma=0$ (left
column), $\sigma=0.3$ (middle column) and $\sigma=2$ (right column),
reconstructions get better when data at more frequencies are used (top: $k=3$;
middle: $k=1,\cdots,8$; bottom: $k=1,\cdots,16$), and more frequencies are
required to properly reconstruct the strength which contains more higher
frequency modes.}
\label{fig:3}
\end{figure}

\section{Conclusion}\label{c}

We have studied an inverse random source scattering problem for
the one-dimensional Helmholtz equation with attenuation, which is to reconstruct
the strength of the random source. Compared with higher dimensional cases
studied in \cite{LW}, the fundamental solution in the one-dimensional case is
smooth, which makes it possible to deal with rougher random sources including
the white noise. The strength is shown to be uniquely determined by the variance
of the wave field in an open measurement set. The attenuation is essential in
the model to get the strength reconstructed point-wisely.

It is open for the recovery of microlocally isotropic random sources for
the one-dimensional Helmholtz equation without attenuation as well as the
recovery of  microlocally isotropic random media for the Helmholtz equation.
For the one-dimensional Helmholtz equation without attenuation, only the average
of the strength of the microlocally isotropic Gaussian random source
over its support could be obtained from on the method presented in this work.
For the higher dimensional Helmholtz equation in microlocally isotropic random
media, the well-posedness of the direct scattering problem has been studied in
\cite{LW3}; for the inverse problem, however, it is difficult to get the
convergence of the Born series generated by the Lippmann--Schwinger equation,
which makes it difficult to get an explicit expression of the strength of the
random media. Some other mathematical tools need to be explored to deal with
these open problems.

\end{document}